\newtheorem{thm}{Theorem}[section]
\newtheorem{cor}[thm]{Corollary}
\newtheorem{lema}[thm]{Lemma}
\theoremstyle{definition}
\theoremstyle{remark}
\newtheorem{rem}[thm]{Remark}
\theoremstyle{example}
\numberwithin{equation}{section}
\newcommand{\R}{\mathbb R}
\newcommand{\N}{\mathbb N}
\newcommand{\Z}{\mathbb Z}
\newcommand{\K}{\mathcal{K}}
\newcommand{\C}{\mathcal{C}}
\newcommand{\ve}{\varepsilon}
\newcommand{\lam}{\lambda}
\newcommand{\cd}{\rightharpoonup}
\newcommand{\cde}{\stackrel{*}{\rightharpoonup}}
\begin{document}
\title{Homogenization of Fu{\v{c}}{\'{\i}}k eigencurves}
\author[J. Fern\'andez Bonder, J.P. Pinasco and A.M. Salort]{Juli\'an Fern\'andez Bonder, Juan Pablo Pinasco and Ariel Martin Salort}

\address{Departamento de Matem\'atica, FCEyN - Universidad de Buenos Aires and
\hfill\break \indent IMAS - CONICET
\hfill\break \indent Ciudad Universitaria, Pabell\'on I (1428) Av. Cantilo s/n. \hfill\break \indent Buenos Aires, Argentina.}

\email[J. Fern\'andez Bonder]{jfbonder@dm.uba.ar}
\urladdr{http://mate.dm.uba.ar/~jfbonder}

\email[J.P. Pinasco]{jpinasco@dm.uba.ar}
\urladdr{http://mate.dm.uba.ar/~jpinasco}

\email[A.M. Salort]{asalort@dm.uba.ar}
\urladdr{http://mate.dm.uba.ar/~asalort}

\subjclass[2010]{35B27, 35P15, 35P30}

\keywords{Fu{\v{c}}{\'{\i}}k eigenvalues, homogenization, order of convergence}

\begin{abstract}
In this work we study the convergence of an homogenization problem for half-eigenvalues
and Fu{\v{c}}{\'{\i}}k eigencurves. We provide quantitative bounds on the rate of
convergence of the curves for periodic homogenization problems.
\end{abstract}

\maketitle
\section{Introduction}

Given a bounded interval $(0,\ell)\subset \R$, we are interested in the asymptotic behavior, as $\ve\to 0$, of the spectrum of the following family of asymmetric elliptic problems
\begin{align} \label{ecu}\tag{$P_\ve$}
\begin{cases}
-u''=   \alpha \, m_\ve(x)\, u^+ - \beta \,  n_\ve(x) \, u^-, \qquad x\in (0,\ell)\\
u(0)=u(\ell)=0,
\end{cases}
\end{align}
where $(\alpha,\beta) \in \R^2_+$, and the functions $m_\ve$, $n_\ve\in L^\infty([0,\ell])$ are positive and uniformly bounded between two positive constants,  
\begin{equation}\label{cotas.pesos}
0<a\leq m_\ve(x), n_\ve(x) \leq b < \infty.
\end{equation}
As usual, given a function $u$ we denote by $u^\pm = \max\{0,\pm u\}$ the positive and negative parts of $u$.

Here we assume that  there exist functions $m_0$, $n_0$  such that
$$
m_\ve \stackrel{*}{\cd} m_0 \quad \text{and}\quad n_\ve\stackrel{*}{\cd} n_0 \quad \text{weakly* in } L^\infty([0,\ell]).
$$
It is well-known that in the case of periodic homogenization, where
$$
m_\ve(x) = m(\tfrac{x}{\ve}), \qquad n_\ve(x) = n(\tfrac{x}{\ve})
$$
for some $\ell-$periodic functions $m, n\in L^\infty(\R)$, we have that $m_\ve \stackrel{*}{\cd} m_0$ and $n_\ve\stackrel{*}{\cd} n_0$ as $\ve\to 0$, where
$$
m_0 = \bar m :=  \frac{1}{\ell }\int_0^\ell m(x)\, dx \quad \text{and}\quad n_0 = \bar n :=  \frac{1}{\ell }\int_0^\ell n(x)\, dx.
$$

We will show that the following limit equation is obtained,
\begin{align} \label{ecu_lim}\tag{$P_0$}
\begin{cases}
-u''=    \alpha \,  m_0 u^+ - \beta \, n_0 u^-  \qquad x\in (0,\ell)\\
u(0)=u(\ell)=0,
\end{cases}
\end{align}
in the sense that, from any  sequence of weak solutions $\{(\alpha_{\ve_j}, \beta_{\ve_j}, u_{\ve_j})\}_{j\ge 1}$ of \eqref{ecu},  with $\ve_j\to 0$, uniformly bounded in $\R^2\times H_0^1([0,\ell])$, we can extract a strongly convergent subsequence in $\R^2\times L^2([0,\ell])$, weakly convergent in $\R^2\times H_0^1([0,\ell])$, and the limit is a weak solution of equation \eqref{ecu_lim}. Here, by a weak solution of \eqref{ecu} with $\ve\ge 0$, we understand a pair $(\alpha, \beta)\in \R^2_+$ and $u \in H_0^1([0,\ell])$ satisfying
\begin{equation}\label{ecu.debil}
\int_0^\ell u'v'\, dx=  \int_0^\ell  \left(\alpha \, m_\ve u^+ - \beta \, n_\ve u^-\right)v\, dx,
\end{equation}
for any $v\in H_0^1([0,\ell])$.

In order to study the convergence of the spectra of the problems \eqref{ecu} to the spectrum of \eqref{ecu_lim}, let us recall some known facts about the structure of the so-called Fu{\v{c}}{\'{\i}}k spectrum, introduced in the '70s by Dancer and Fu{\v{c}}{\'{\i}}k, see \cite{DAN, FUCIK-libro}. For any $\ve \ge 0$ fixed, let us denote by 
\begin{equation}\label{fucik.spectrum}
\Sigma_\ve = \Sigma(m_\ve, n_\ve) := \{(\alpha,\beta)\subset \R^2\colon \text{there exists a nontrivial solution of \eqref{ecu}}\},
\end{equation}
the Fu{\v{c}}{\'{\i}}k spectrum of problem \eqref{ecu}. In the above mentioned references, it is proved that the spectrum $\Sigma_\ve$ has the structure
$$
\Sigma_\ve = \mathcal{C}_{0,\ve}^\pm \cup \bigcup_{k\in \N } \mathcal{C}_{k,\ve}^\pm, 
$$
where each $\mathcal{C}_{k,\ve}^\pm$ is a curve in $\R^2$ for any $k\ge 0$. The curves $\mathcal{C}_{0,\ve}^\pm$ are called the trivial curves and are given by $\mathcal{C}_{0,\ve}^+=\{\lam_1^{m_\ve}\} \times \R$, $\mathcal{C}_{0,\ve}^- = \R \times \{\lam_1^{n_\ve} \}$, where $\lam_k^r$ denotes the $k-$th eigenvalue of the Dirichlet laplacian in $(0,\ell)$ with weight $r\in L^\infty([0,\ell])$, namely
\begin{equation}\label{ecu2}\begin{cases}
-u''= \lam r(x) u, & \qquad x\in (0,\ell)\\
u(0)=u(\ell)=0.
\end{cases}
\end{equation}
Observe that any eigenfunction associated to $\lambda_1^r$ has constant sign.

The curves $\mathcal{C}_{k,\ve}^+$ (resp. $\mathcal{C}_{k,\ve}^-$) with $k\ge 1$ correspond to nontrivial solutions having $k$ internal zeros and positive (resp. negative) slope at the origin. 

We have two curves for every $k\in \N$. In the constant coefficient case, for $k$ even, both curves coincide but this is not true for general weights. 

The curves $\mathcal{C}_{k,\ve}^\pm$ are not known explicitly
 for general weights $m_\ve, n_\ve$, and only its asymptotic behavior as $\alpha\to \infty$ (or $\beta\to \infty$) is known, see \cite{PiS, RYN}.

The study of homogenization problems for asymmetric eigenvalues is not well understood nowadays. We cite the paper \cite{Malik} of Malik where the homogenization problem for a model of suspension bridges was studied. In that work the author studies a model where the  cable resist the expansion but does not resist compression. More recently, in \cite{LY10}, Li and Yan studied the continuity of the eigenvalues $\lam(a_n,b_n)$ of the problem
$$
-(|u'|^{p-2}u')' = \lam |u|^{p-2} u + a_n(x) |u^+|^{p-2} u^+ -b_n(x) |u^-|^{p-2} u^-, \qquad x\in (0,\ell)
$$
with homogeneous boundary conditions
$$
c_{11}u(0)+c_{12}u'(0)=0=c_{21}u(\ell)+c_{22}u'(\ell),
$$
and the convergence to the eigenvalues of
$$
-(|u'|^{p-2}u')' = \lam |u|^{p-2} u + a(x) |u^+|^{p-2} u^+ -b(x) |u^-|^{p-2} u^-, \qquad x\in (0,\ell)
$$
with the same boundary conditions, where $a_n \cd a$ and $b_n\cd b$ weakly in $L^\gamma([0,\ell])$ for $1\le \gamma< \infty$.

Also, the behavior as $\ve\to 0$ of the first nontrivial curve in the Fu{\v{c}}{\'{\i}}k spectrum for the $p-$Laplace operator in $\R^n$ for $n\geq 1$ was obtained by the third author in \cite{S14}.

On the other hand, the homogenization of spectral problems in the symmetric case have been widely studied in both the linear and quasilinear cases. See for example \cite{Con, zuazua, ChP07, FBPS1, Kenig, Kes1, Kes2, Ol}  and the references therein. 

In this work we prove the convergence of the eigenvalues of problem \eqref{ecu} to the ones of problem \eqref{ecu_lim}. Moreover, in the case of periodic homogenization  we obtain the rate of convergence whenever we restrict $\Sigma_\ve$ and $\Sigma_0$ to a line through the origin, and we give explicit bounds depending on $\ve$, $k$, and the slope of the line. 

Since the constant degenerates when the line approaches the axis, it is convenient to denote, for any $0<t<1$ by  $\K_t$ a symmetric cone in the first quadrant defined by
\begin{equation}\label{eq.cone}
\K_t := \{(\alpha,\beta)\in \R_+\times \R_+\colon t\alpha\le \beta\le t^{-1}\alpha\}.
\end{equation}
Our main results are the following:
\begin{thm}[General convergence] \label{main} 
Let $\{m_\ve\}_{\ve>0}$ and $\{n_\ve\}_{\ve>0}$ be two families of weights satisfying \eqref{cotas.pesos} such that 
$$
m_\ve \stackrel{*}{\cd} m_0 \quad\text{and}\quad n_\ve\stackrel{*}{\cd}n_0
$$
weakly* in $L^\infty([0,\ell])$ and let $\Sigma_\ve$ be the associated Fu{\v{c}}{\'{\i}}k spectrum defined in \eqref{fucik.spectrum}.

Let $(\alpha_{k,\ve},\beta_{k,\ve}) \in \mathcal{C}_{k,\ve}\cap \K_t\subset \Sigma_\ve$. Then, $\{(\alpha_{k,\ve},\beta_{k,\ve})\}_{\ve>0}$ is bounded in $\R^2$ and if  $(\alpha_{k,0},\beta_{k,0})$ is any accumulation point of $\{(\alpha_{k,\ve},\beta_{k,\ve})\}_{\ve>0}$, then $(\alpha_{k,0},\beta_{k,0}) \in \C_{k,0}\cap\K_t\subset \Sigma_0$.

Moreover, if $(\alpha_{k,\ve},\beta_{k,\ve}) \in \mathcal{C}_{k,\ve}^+\cap \K_t$, then $(\alpha_{k,0},\beta_{k,0})\in \C_{k,0}^+$ and analogous result for $\C_{k,\ve}^-$.

Finally, if $u_{\ve}\in H^1_0([0,\ell])$ is an eigenfunction of \eqref{ecu} associated to $(\alpha_{\ve},\beta_{\ve})$ normalized such that $\|u_{\ve}\|_2=1$, then, there exists $u_0\in H^1_0([0,\ell])$ and a sequence $\ve_{j}\downarrow 0$ such that $u_{\ve_{j}}\cd u_0$ and $u_0$ is an eigenfunction of \eqref{ecu_lim} associated to $(\alpha_0,\beta_0)$.
\end{thm}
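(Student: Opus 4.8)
The plan is to prove this via a compactness argument, exploiting the one-dimensional structure heavily. First I would establish the \emph{a priori bounds}: fixing the cone $\K_t$, an eigenpair $(\alpha_{k,\ve},\beta_{k,\ve})\in\C_{k,\ve}\cap\K_t$ with eigenfunction $u_\ve$ normalized by $\|u_\ve\|_2=1$ satisfies, upon testing \eqref{ecu.debil} with $v=u_\ve$, the identity $\int_0^\ell (u_\ve')^2\,dx = \alpha_{k,\ve}\int m_\ve (u_\ve^+)^2 + \beta_{k,\ve}\int n_\ve (u_\ve^-)^2$. Using \eqref{cotas.pesos} this gives $\|u_\ve'\|_2^2 \le b\max\{\alpha_{k,\ve},\beta_{k,\ve}\}$, so boundedness of $u_\ve$ in $H_0^1$ is equivalent to boundedness of the pair. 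For the latter, the standard Fu\v{c}\'ik argument works: a nontrivial solution on $\C_{k,\ve}$ has exactly $k$ interior zeros, so on each nodal interval $u_\ve$ solves a Dirichlet eigenvalue problem \eqref{ecu2} with weight $\alpha_{k,\ve}m_\ve$ or $\beta_{k,\ve}n_\ve$ and first eigenvalue $1$; comparing with the constant weights $a$ and $b$ via monotonicity of eigenvalues, the nodal intervals have length bounded below by $c/\sqrt{\max\{\alpha_{k,\ve},\beta_{k,\ve}\}}$ and there are $k+1$ of them summing to $\ell$, forcing $\max\{\alpha_{k,\ve},\beta_{k,\ve}\}\le C(k,a,\ell)$, and the cone condition then bounds the minimum too. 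This proves the first assertion.

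Next I would pass to the limit. By the bounds just obtained, along a subsequence $\ve_j\downarrow 0$ we have $(\alpha_{k,\ve_j},\beta_{k,\ve_j})\to(\alpha_0,\beta_0)\in\K_t$ and $u_{\ve_j}\cd u_0$ in $H_0^1$, hence $u_{\ve_j}\to u_0$ strongly in $C([0,\ell])$ by the compact Sobolev embedding in dimension one; in particular $u_\ve^\pm\to u_0^\pm$ uniformly and $\|u_0\|_2=1$, so $u_0\not\equiv 0$. To pass to the limit in \eqref{ecu.debil} with a fixed $v\in H_0^1$: the left side converges since $u_{\ve_j}'\cd u_0'$ weakly in $L^2$; on the right side, $m_{\ve_j}\cde m_0$ weakly$^*$ in $L^\infty$ while $(u_{\ve_j}^+ v)\to (u_0^+ v)$ strongly in $L^1$, so the product $\int m_{\ve_j}u_{\ve_j}^+ v\to \int m_0 u_0^+ v$ (weak$^*$ $L^\infty$ times strong $L^1$), and similarly for the $n_\ve$ term. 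Hence $u_0$ is a weak solution of \eqref{ecu_lim} with parameters $(\alpha_0,\beta_0)$, so $(\alpha_0,\beta_0)\in\Sigma_0\cap\K_t$; this simultaneously gives the last assertion about eigenfunctions.

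It remains to identify the \emph{index} of the limit curve, i.e.\ that $(\alpha_0,\beta_0)\in\C_{k,0}$ with the correct $\pm$ sign. Here the hard part is to show $u_0$ has exactly $k$ interior zeros — upper semicontinuity of the nodal count is automatic from uniform convergence, but zeros could in principle disappear or merge in the limit. The remedy is again the nodal-length estimate from the first step: each nodal interval of $u_{\ve_j}$ has length $\ge c/\sqrt{C(k,a,\ell)}$ uniformly in $j$, so the $k$ interior zeros stay separated and converge to $k$ distinct interior points $z_1<\dots<z_k$; uniform convergence forces $u_0(z_i)=0$, and on each of the $k+1$ limiting subintervals $u_0$ is a nontrivial (by the length lower bound and the equation) first-eigenfunction hence sign-definite, so it has no further zeros. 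Thus $u_0$ has exactly $k$ interior zeros. Finally, the sign of the slope at the origin is preserved: $u_\ve'(0)$ has a definite sign on $\C_{k,\ve}^\pm$ and $u_{\ve_j}'(0)\to u_0'(0)\ne 0$ (the latter nonzero because $u_0$ is a nontrivial solution of a linear ODE on the first nodal interval), so $(\alpha_0,\beta_0)$ lies on $\C_{k,0}^+$ resp.\ $\C_{k,0}^-$ accordingly. Since every accumulation point of the full family is captured by some such subsequence, this completes the proof.
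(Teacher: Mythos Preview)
Your proof is correct and follows essentially the same route as the paper: the paper packages the three steps you outline as separate preliminary results (passing to the limit in the weak form, the eigenvalue bound via a nodal interval of length $\ge \ell/k$ plus Sturm comparison, and the uniform lower bound on nodal-interval lengths), and then assembles them exactly as you do. One small point: your claim $u_{\ve_j}'(0)\to u_0'(0)$ does not follow from weak $H_0^1$ convergence alone; you should either invoke the equation to get a uniform $W^{2,\infty}$ bound (hence $C^1$ compactness), or---more simply---argue that uniform convergence together with the uniform lower bound on the length of the first nodal interval forces $u_0$ to have the same sign as $u_{\ve_j}$ near $0$, which already pins down the $\pm$ branch without touching derivatives.
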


In the case of periodic homogenization one can do better and obtain an order of convergence. In order to do this one needs to select a point on the curve of the spectrum $\Sigma_\ve$ and follow that point as $\ve\downarrow 0$. This is done in the following way: given $t>0$, there exists a unique $\lambda_{k,t,\ve}^\pm$ such that $(\lambda_{k,t,\ve}^\pm, t\lambda_{k,t,\ve}^\pm)\in \C_{k,\ve}^\pm$. Moreover
$$
\C_{k,\ve}^\pm = \bigcup_{t>0} \{(\lambda_{k,t,\ve}^\pm, t\lambda_{k,t,\ve}^\pm)\}.
$$

\begin{thm}[Periodic homogenization] \label{main2} 
Assume that $m_\ve(x) = m(\tfrac{x}{\ve})$ and $n_\ve(x) = n(\tfrac{x}{\ve})$ for some $\ell-$periodic functions $m, n \in L^\infty(\R)$ satisfying \eqref{cotas.pesos}.

Then, we have the bound
$$
|\lambda_{k,t,\ve}^\pm - \lambda_{k,t,0}^\pm| \le C\left(\frac{k}{\ell}\right)^3\gamma(t)\ve,
$$
where $C$ depends only on the constants $a, b$ in \eqref{cotas.pesos} and $\gamma(t)=\max\{t^{-\frac32}, t^\frac12\}$.
\end{thm}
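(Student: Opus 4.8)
\emph{Proof strategy.} The argument exploits the explicit one-dimensional structure of the eigenfunctions of \eqref{ecu}. Fix $t>0$, denote by $\lambda_1(r,J)$ the first Dirichlet eigenvalue of a weight $r$ on an interval $J$, and let $(\lambda,t\lambda)\in\C_{k,\ve}^{+}$ with eigenfunction $u$ (the case of $\C_{k,\ve}^{-}$ is symmetric). Since the right hand side of \eqref{ecu} is Lipschitz in $u$, $u$ has no double zero, so at each of its $k$ interior zeros it changes sign; writing $0=z_0<z_1<\dots<z_{k+1}=\ell$ for the nodes and $I_j=(z_j,z_{j+1})$, the function $u$ has constant sign on $I_j$, positive on $I_0$ and alternating. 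On a positive bump $u$ is a positive Dirichlet solution of $-u''=\lambda m_\ve u$ on $I_j$, hence a first eigenfunction, so $\lambda=\lambda_1(m_\ve,I_j)$; likewise $t\lambda=\lambda_1(n_\ve,I_j)$ on every negative bump. The $C^1$ matching at the interior nodes imposes no extra constraint, since each bump may be rescaled by a nonzero constant of the appropriate sign. Hence, writing $p=\lceil(k+1)/2\rceil$ and $q=\lfloor(k+1)/2\rfloor$ for the numbers of positive and negative bumps, $\lambda=\lambda_{k,t,\ve}^{+}$ is the unique number for which the $k+1$ consecutive intervals produced from $\lambda_1(m_\ve,\cdot)=\lambda$ on positive bumps and $\lambda_1(n_\ve,\cdot)=t\lambda$ on negative bumps, laid down from $0$, exactly fill $[0,\ell]$. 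When $\ve=0$ the weights are the constants $\bar m,\bar n$, a positive bump has length $\pi/\sqrt{\lambda\bar m}$ and a negative one $\pi/\sqrt{t\lambda\bar n}$, and the filling condition becomes
\begin{equation*}
\frac{1}{\sqrt{\lambda_{k,t,0}^{+}}}=\frac{\ell}{\pi A},\qquad A:=\frac{p}{\sqrt{\bar m}}+\frac{q}{\sqrt{t\bar n}}.
\end{equation*}

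\emph{The key estimate.} The quantitative core is a one-period bound for bump lengths: if $r$ is periodic with $0<a\le r\le b$ and mean $\bar r$, then for every $\mu>0$ and every $x_0$ the distance $\tau$ from $x_0$ to the next zero on its right of the solution of $-v''=\mu\, r(\cdot/\ve)\,v$ with $v(x_0)=0$, $v'(x_0)=1$ satisfies $|\tau-\pi/\sqrt{\mu\bar r}|\le C\ve$, with $C$ depending only on $a,b$. I would prove it by rescaling $x=x_0+\frac{\pi}{\sqrt{\mu\bar r}}\,s$, which replaces the weight by one $\tilde r$ of mean $1$ with bounds $a/\bar r,\,b/\bar r$ and period $\ve\sqrt{\mu\bar r}/\pi$, and thus reduces the claim to showing that the first zero of $V$ is within a fixed multiple of a period of $1$; then I would compare $V$ with the solution $W$ of the homogenized equation $-W''=\pi^{2}W$ carrying the same Cauchy data, and estimate $E=V-W$ by variation of parameters against the mean-zero primitive $\int_{x_0}^{s}(\tilde r-1)$, which is bounded by a fixed multiple of a period. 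This yields $\|E\|_{\infty}\le C(b/a)\cdot(\text{one period})$ on a fixed neighbourhood of the nondegenerate zero of $W$, so that the zero of $V$ is displaced by $O(\text{one period})$. I expect this step, and specifically the uniformity of the constant in the interval location $x_0$ and in the eigenvalue $\mu$ (so that it does not deteriorate for the long negative bumps arising as $t\to0$, nor for large $k$), to be the main obstacle.

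\emph{Conclusion.} Applying the estimate to each of the $k+1$ bumps of the $\ve$-eigenfunction and summing, the identity $\sum_j|I_j|=\ell$ gives $\big|\pi A/\sqrt{\lambda_{k,t,\ve}^{+}}-\ell\big|\le C(k+1)\ve$, and subtracting the identity for $\ve=0$,
\begin{equation*}
\Big|\frac{1}{\sqrt{\lambda_{k,t,\ve}^{+}}}-\frac{1}{\sqrt{\lambda_{k,t,0}^{+}}}\Big|\le\frac{C(k+1)\ve}{\pi A}.
\end{equation*}
Using $a\le\bar m,\bar n\le b$ and $\tfrac{k}{2}\le p,q\le k+1$, the quantity $A$ is, up to constants depending only on $a,b$, between $(k+1)\min\{1,t^{-1/2}\}$ and $(k+1)\max\{1,t^{-1/2}\}$, so the factor $k+1$ cancels in the last bound. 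Now write
\begin{equation*}
|\lambda_{k,t,\ve}^{+}-\lambda_{k,t,0}^{+}|=\big(\sqrt{\lambda_{k,t,\ve}^{+}}+\sqrt{\lambda_{k,t,0}^{+}}\big)\,\sqrt{\lambda_{k,t,\ve}^{+}\lambda_{k,t,0}^{+}}\;\Big|\frac{1}{\sqrt{\lambda_{k,t,0}^{+}}}-\frac{1}{\sqrt{\lambda_{k,t,\ve}^{+}}}\Big|,
\end{equation*}
and combine it with $\lambda_{k,t,0}^{+}=(\pi A/\ell)^{2}$ and the a priori bound $\lambda_{k,t,\ve}^{+}\le(\pi A/\ell)^{2}\cdot\text{const}(a,b)$, which follows from Sturm comparison of each bump with the constant weight $a$ (and, qualitatively, already from Theorem~\ref{main}). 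When $C(k+1)\ve\le\ell/2$ this control gives $\lambda_{k,t,\ve}^{+}\le 4\lambda_{k,t,0}^{+}$, so the displayed product is at most a constant times $(\lambda_{k,t,0}^{+})^{3/2}$ times the previous bound; when $C(k+1)\ve>\ell/2$ one uses instead $|\lambda_{k,t,\ve}^{+}-\lambda_{k,t,0}^{+}|\le\lambda_{k,t,\ve}^{+}+\lambda_{k,t,0}^{+}$, which is then already of the required size. Since $\lambda_{k,t,0}^{+}$ is comparable to $\big((k+1)/\ell\big)^{2}\max\{1,t^{-1}\}$, gathering the powers of $t$ and of $(k+1)/\ell$ produced above yields
\begin{equation*}
|\lambda_{k,t,\ve}^{\pm}-\lambda_{k,t,0}^{\pm}|\le C\Big(\frac{k}{\ell}\Big)^{3}\gamma(t)\,\ve,\qquad \gamma(t)=\max\{t^{-3/2},t^{1/2}\},
\end{equation*}
with $C=C(a,b)$, the product of the $t$-exponents coming out bounded by $\gamma(t)$.
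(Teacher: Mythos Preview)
Your proposal is correct and takes a genuinely different route from the paper's.

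\textbf{What the paper does.} The paper first observes (Lemma~3.3) that any two positive nodal intervals of $u_{k,t,\ve}$ differ in length by at most $2\ve$: if one were more than $2\ve$ longer, one could translate the shorter bump by an integer number of periods into the larger interval and contradict the simplicity of the first eigenvalue. Combined with an elementary averaging lemma, this shows that each pair $I_{j,\ve}^{+}\cup I_{j,\ve}^{-}$ has length within $4\ve$ of $2\ell/k$. The paper then restricts to the first pair $J_\ve=(0,c_\ve)$, writes $\lambda_{k,t,\ve}^{+}=\lambda_2^{m_\ve,tn_\ve,J_\ve}$, rescales to $[0,1]$, and \emph{invokes a previously established result} (Theorem~3.7, from \cite{Salort}) giving $|\lambda_{2,t,\ve'}-\lambda_{2,t,0}|\le C\ve' t^{-3/2}$. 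The final bound is assembled from this and the estimate on $|c_\ve-c_0|$.

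\textbf{What you do differently.} You bypass both Lemma~3.3 and the black-box Theorem~3.7 by proving a direct one-bump estimate: the distance between consecutive zeros of the IVP $-v''=\mu\,r(\cdot/\ve)v$ differs from $\pi/\sqrt{\mu\bar r}$ by at most $C(a,b)\ve$, uniformly in $\mu$ and in the starting point. Your uniformity concern is easily resolved: after your rescaling the effective period is $\delta\asymp\ell\ve\sqrt{\mu\bar r}$, and either $\delta$ is small, in which case variation of parameters against the $O(\delta)$ primitive of $\tilde r-1$ gives $|S-1|\le C\delta$, or $\delta\gtrsim 1$, in which case Sturm already gives $|S-1|\le C(a,b)\le C(a,b)\delta$. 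Unwinding the rescaling cancels the factor $\sqrt{\mu}$ and yields the claimed $C\ve$ in the original variable. Summing over the bumps and converting to $|\lambda_\ve-\lambda_0|$ as you do is then straightforward.

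\textbf{What each approach buys.} The paper's argument is modular---it reduces everything to the $k=2$ case---but it is not self-contained: the heart of the estimate is delegated to \cite{Salort}. Your argument is self-contained and more elementary, and as a bonus actually produces a sharper $t$-dependence: following your computation carefully one gets $\max\{t^{-1},1\}$ rather than $\gamma(t)=\max\{t^{-3/2},t^{1/2}\}$, so the stated theorem follows a fortiori. A minor bookkeeping point: the paper's Section~3 takes $\lambda_{k,t}^{\pm}$ to have $k$ nodal domains (not $k$ interior zeros as in the introduction), so your $p,q$ should sum to $k$ rather than $k+1$; this does not affect the argument.
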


The order of convergence for homogenization of different eigenvalue problems were obtained in \cite{zuazua, FBPS, Kenig, S14}. Let us recall that  in \cite{zuazua, Kenig} the problem was linear, and asymptotic expansions were used. On the other hand, in \cite{FBPS, S14} the proofs
relayed on the variational structure of the problem. Here, there are no variational characterization of the higher curves of the Fu{\v{c}}{\'{\i}}k   spectrum, nor linear arguments available, so the proofs  are obtained by exploiting the nodal structure of the eigenfunctions.

\subsection*{Organization of the paper}
The paper is organized as follows:  In Section \S 2 we prove the general convergence result, Theorem \ref{main}, and in Section \S 3 we study the periodic oscillation case and prove Theorem \ref{main2}.

\section{A general convergence result}

In this section we prove our general convergence result, Theorem \ref{main}. We begin with an even more general, and therefore more vague, result on the convergence of Fu{\v{c}}{\'{\i}}k eigenvalues.

Throughout this section, we will use the notation $\lambda_1^{r,I}$ to denote the first eigenvalue of the Laplacian on the interval $I$ with weight function $r$ complemented with homogeneous Dirichlet boundary conditions. That is, $\lambda_1^{r,I}$ is the first eigenvalue of
$$
\begin{cases}
-u'' = \lambda r(x) u & \text{in } I\\
u=0 & \text{on }\partial I.
\end{cases}
$$
Let us recall, that if the weight $r(x)=constant = c$ then $\lambda_1^{r,I} = \lambda_1^{c,I} = \frac{\pi^2}{c|I|^2}$.

\begin{thm}\label{conv.alpha.beta}
Let $m_\ve$ and $n_\ve$ be two weight functions satisfying \eqref{cotas.pesos} and assume that $m_{\ve_j} \cde m_0$, $n_{\ve_j}\cde n_0$ weakly* in $L^\infty([0,\ell])$.  Let $\Sigma_\ve$ ($\ve\ge 0$) be the  Fu{\v{c}}{\'{\i}}k spectrum given by \eqref{fucik.spectrum}.    

If $(\alpha_{\ve_j},\beta_{\ve_j})\in \Sigma_{\ve_j}$ are such that $(\alpha_{\ve_j},\beta_{\ve_j})\to (\alpha_0,\beta_0)$ as $j\to\infty$, then $(\alpha_0,\beta_0)\in \Sigma_0$. Moreover, if $u_{\ve_j}\in H^1_0([0,\ell])$ is an eigenfunction of \eqref{ecu} associated to $(\alpha_{\ve_j},\beta_{\ve_j})$ normalized such that $\|u_{\ve_j}\|_2=1$, then, there exists $u_0\in H^1_0([0,\ell])$ and a subsequence $\ve_{j_i}\downarrow 0$ such that $u_{\ve_{j_i}}\cd u_0$ and $u_0$ is an eigenfunction of \eqref{ecu_lim} associated to $(\alpha_0,\beta_0)$.
\end{thm}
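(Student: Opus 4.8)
The plan is to combine a uniform energy estimate with the compact embedding $H^1_0([0,\ell])\hookrightarrow C([0,\ell])$ and a standard ``weak$^*$ times strong'' passage to the limit in the nonlinear terms. First I would obtain the a priori bound: testing \eqref{ecu.debil} for $u_{\ve_j}$ with $v=u_{\ve_j}$ and using $(u_{\ve_j})^+(u_{\ve_j})^-=0$ gives
$$
\int_0^\ell |u_{\ve_j}'|^2\,dx = \int_0^\ell\big(\alpha_{\ve_j} m_{\ve_j}\big((u_{\ve_j})^+\big)^2 + \beta_{\ve_j} n_{\ve_j}\big((u_{\ve_j})^-\big)^2\big)\,dx \le b\,\max\{\alpha_{\ve_j},\beta_{\ve_j}\}\,\|u_{\ve_j}\|_2^2.
$$
Since $(\alpha_{\ve_j},\beta_{\ve_j})\to(\alpha_0,\beta_0)$ the right-hand side is bounded and $\|u_{\ve_j}\|_2=1$, so $\{u_{\ve_j}\}$ is bounded in $H^1_0([0,\ell])$. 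Hence there exist a subsequence $\ve_{j_i}\downarrow 0$ and $u_0\in H^1_0([0,\ell])$ with $u_{\ve_{j_i}}\cd u_0$ in $H^1_0$; by the compactness of the embedding into $C([0,\ell])$ (Arzel\`a--Ascoli, using $|u(x)-u(y)|\le\|u'\|_2|x-y|^{1/2}$) we get $u_{\ve_{j_i}}\to u_0$ uniformly, hence in $L^2$, and likewise $(u_{\ve_{j_i}})^\pm\to u_0^\pm$ uniformly and in $L^2$, with a uniform sup bound.

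Next I would pass to the limit in \eqref{ecu.debil}. Fix $v\in H^1_0([0,\ell])$. The left-hand side converges to $\int_0^\ell u_0'v'\,dx$ by weak convergence of $u_{\ve_{j_i}}'$ in $L^2$. For the first term on the right-hand side I would split
$$
\int_0^\ell \alpha_{\ve_{j_i}} m_{\ve_{j_i}}(u_{\ve_{j_i}})^+ v\,dx = \alpha_{\ve_{j_i}}\!\int_0^\ell m_{\ve_{j_i}}\big((u_{\ve_{j_i}})^+-u_0^+\big)v\,dx + \alpha_{\ve_{j_i}}\!\int_0^\ell m_{\ve_{j_i}}u_0^+ v\,dx.
$$
The first integral tends to $0$ since $|m_{\ve_{j_i}}|\le b$ and $(u_{\ve_{j_i}})^+v\to u_0^+v$ in $L^1$ (H\"older, using the $L^2$ convergence of $(u_{\ve_{j_i}})^+$ and $v\in L^2$); the second tends to $\alpha_0\int_0^\ell m_0 u_0^+ v\,dx$ because $\alpha_{\ve_{j_i}}\to\alpha_0$ and $m_{\ve_{j_i}}\cde m_0$ tested against the fixed $L^1$ function $u_0^+v$. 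The term with $\beta_{\ve_{j_i}} n_{\ve_{j_i}}(u_{\ve_{j_i}})^-$ is treated identically. Therefore $u_0$ satisfies \eqref{ecu.debil} with $\ve=0$ and $(\alpha,\beta)=(\alpha_0,\beta_0)$, i.e. $u_0$ is a weak solution of \eqref{ecu_lim}.

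Finally, for nontriviality: since $u_{\ve_{j_i}}\to u_0$ strongly in $L^2$ and $\|u_{\ve_{j_i}}\|_2=1$, we get $\|u_0\|_2=1$, hence $u_0\not\equiv 0$, so $u_0$ is an eigenfunction of \eqref{ecu_lim} associated to $(\alpha_0,\beta_0)$ and in particular $(\alpha_0,\beta_0)\in\Sigma_0$. The step I expect to be the main obstacle is precisely the passage to the limit in the nonlinear terms: weak$^*$ convergence of the weights does not survive multiplication, so one genuinely needs the strong (compact) convergence of $\{u_{\ve_j}\}$ together with the continuity of $u\mapsto u^\pm$ to rewrite the product as a ``strongly negligible'' remainder plus a fixed test function against which the weak$^*$ convergence of $m_{\ve_j}$, $n_{\ve_j}$ can be used.
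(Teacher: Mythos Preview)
Your proof is correct and follows essentially the same approach as the paper: uniform $H^1_0$ bound from testing the equation with $u_{\ve_j}$, compactness via $H^1_0([0,\ell])\hookrightarrow C([0,\ell])$, then passing to the limit in the weak formulation. You are in fact more careful than the paper in two places: you spell out the splitting that makes the ``weak$^*$ times strong'' limit in the nonlinear terms rigorous (the paper only records $(u_{\ve_j}^\pm)^2\to (u_0^\pm)^2$ in $L^1$, which is not quite the right object for an arbitrary test function $v$), and you explicitly verify $\|u_0\|_2=1$ to rule out the trivial limit, a point the paper's proof omits.
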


\begin{proof}
Let $u_{\ve_j}\in H^1_0([0,\ell])$ be an eigenfunction of \eqref{ecu} associated to $(\alpha_{\ve_j},\beta_{\ve_j})$ and normalized such that $\|u_{\ve_j}\|_2=1$.

Then, since $(\alpha_{\ve_j},\beta_{\ve_j})$ is bounded and since the weights $m_{\ve_j}$, $n_{\ve_j}$ are uniformly bounded, taking $v=u_{\ve_j}$ as a test function in \eqref{ecu.debil} we get
\begin{align*}
\int_0^\ell |u_{\ve_j}'|^2\, dx &= \alpha_{\ve_j} \int_0^\ell m_{\ve_j} (u_{\ve_j}^+)^2\, dx + \beta_{\ve_j} \int_0^\ell n_{\ve_j} (u_{\ve_j}^-)^2\, dx\\
&\le C \int_0^\ell (u_{\ve_j}^+)^2 + (u_{\ve_j}^-)^2\, dx = C \|u_{\ve_j}\|_2^2.
\end{align*}
Therefore, there exists a subsequence, that we still denoting by $\ve_j\downarrow 0$, and $u_0\in H^1_0([0,\ell])$ such that $u_{\ve_j}\cd u_0$ weakly in $H^1_0([0,\ell])$ and $u_{\ve_j} \to u_0$ uniformly in $[0,\ell]$. These facts automatically imply that $(u_{\ve_j}^\pm)^2 \to (u_0^\pm)^2$ strongly in $L^1([0,\ell])$.

So, we can pass to the limit in the weak form of the equation, \eqref{ecu.debil} to obtain
$$
\int_0^\ell u_0' v'\, dx = \alpha_0 \int_0^\ell m_0 u_0^+ v\, dx - \beta_0 \int_0^\ell u_0^- v\, dx,
$$
for every $v\in H^1_0([0,\ell])$. This finishes the proof.
\end{proof}

Let us now see that if we take a sequence $\{(\alpha_{k,\ve},\beta_{k,\ve})\}_{\ve>0}\subset \C_{k,\ve}$ with a fixed $k\in\N$, then the sequence of eigenvalues remains uniformly bounded as long as they are confined in a cone $\K_t$.

\begin{thm}\label{cota.alpha.beta}
Given $0<t<1$ let $\K_t$ be the cone defined in \eqref{eq.cone}.

Let $k\in \N$ be fixed and consider $(\alpha_{k,\ve},\beta_{k,\ve})\in \C_{k,\ve}\cap \K_t$.
Then, we have the bound
$$
\max\{\alpha_{k,\ve}, \beta_{k,\ve}\} \le t^{-1} \frac{\pi^2 k^2}{a\ell^2}.
$$
\end{thm}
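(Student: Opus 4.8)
The plan is to exploit the nodal structure of a Fučík eigenfunction on the curve $\C_{k,\ve}$. If $(\alpha_{k,\ve},\beta_{k,\ve})\in\C_{k,\ve}$, then the associated eigenfunction $u_\ve$ has exactly $k$ internal zeros in $(0,\ell)$; call them $0=x_0<x_1<\dots<x_k<x_{k+1}=\ell$. On each subinterval $I_i=(x_i,x_{i+1})$, the function $u_\ve$ has constant sign and vanishes at both endpoints, so it solves, on $I_i$, either $-u_\ve''=\alpha_{k,\ve} m_\ve u_\ve$ (where $u_\ve>0$) or $-u_\ve''=\beta_{k,\ve} n_\ve u_\ve$ (where $u_\ve<0$). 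In the first case $u_\ve|_{I_i}$ is a first eigenfunction (it has constant sign) of the weighted Dirichlet problem on $I_i$ with weight $m_\ve$ at eigenvalue $\alpha_{k,\ve}$, hence $\alpha_{k,\ve}=\lambda_1^{m_\ve,I_i}$; similarly $\beta_{k,\ve}=\lambda_1^{n_\ve,I_i}$ in the second case.

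Next I would estimate $\lambda_1^{m_\ve,I_i}$ from below using \eqref{cotas.pesos}. Since $m_\ve\le b$ pointwise (and here the relevant bound is $m_\ve\ge a$... careful: smaller weight gives larger first eigenvalue), by the monotonicity of the first eigenvalue in the weight we have $\lambda_1^{m_\ve,I_i}\ge \lambda_1^{b,I_i}=\frac{\pi^2}{b|I_i|^2}$; and for the upper bound, which is what we need, $\lambda_1^{m_\ve,I_i}\le\lambda_1^{a,I_i}=\frac{\pi^2}{a|I_i|^2}$. The same holds with $n_\ve$. Now at least one subinterval $I_i$ has length $|I_i|\ge \ell/(k+1)$... but to get the cleaner bound with $k^2$ I instead use: there are $k+1$ subintervals, so $\sum|I_i|=\ell$ forces $\min_i|I_i|\le \ell/(k+1)$; pick such a shortest interval. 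Hmm — for an \emph{upper} bound on $\alpha_{k,\ve}$ I want a subinterval on which $u_\ve>0$ that is \emph{short}. This requires knowing a short positive subinterval exists; since positive and negative subintervals alternate and there are $k+1$ of them total, among the shorter ones there is one of each sign as soon as $k\ge 1$, and in any case the shortest overall has length $\le\ell/(k+1)$. On that shortest subinterval, whichever sign it has, we get either $\alpha_{k,\ve}=\lambda_1^{m_\ve,I_i}\le \pi^2(k+1)^2/(a\ell^2)$ or $\beta_{k,\ve}=\lambda_1^{n_\ve,I_i}\le\pi^2(k+1)^2/(a\ell^2)$.

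Finally I would use the cone condition to transfer the bound on one coordinate to the other. Since $(\alpha_{k,\ve},\beta_{k,\ve})\in\K_t$, we have $t\alpha_{k,\ve}\le\beta_{k,\ve}\le t^{-1}\alpha_{k,\ve}$, so $\alpha_{k,\ve}$ and $\beta_{k,\ve}$ are comparable up to the factor $t^{-1}$; bounding the smaller one by $\pi^2(k+1)^2/(a\ell^2)$ yields $\max\{\alpha_{k,\ve},\beta_{k,\ve}\}\le t^{-1}\pi^2(k+1)^2/(a\ell^2)$, and replacing $(k+1)^2$ by a cruder $k^2$-type bound (or simply noting $k+1\le$ const$\cdot k$) gives the stated form — though as written the theorem claims exactly $t^{-1}\pi^2 k^2/(a\ell^2)$, so one should check whether a sharper count of nodal intervals (perhaps using that one endpoint sign is forced) actually removes the $+1$; this bookkeeping about whether the shortest \emph{relevant}-sign interval has length $\le\ell/k$ rather than $\le\ell/(k+1)$ is the one delicate point. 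The main obstacle, then, is not analytic but combinatorial: correctly identifying a short nodal subinterval of the appropriate sign so that the eigenvalue-monotonicity estimate applies to the coordinate one wants to control, and getting the node count exactly right to match the claimed constant.
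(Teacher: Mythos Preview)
Your argument contains a genuine error in the direction of the inequality. You correctly derive that on any positive nodal interval $I_i$ one has $\alpha_{k,\ve}=\lambda_1^{m_\ve,I_i}\le \lambda_1^{a,I_i}=\dfrac{\pi^2}{a|I_i|^2}$. But then you say that for an upper bound on $\alpha_{k,\ve}$ you want $I_i$ to be \emph{short}. That is backwards: a short $I_i$ makes $\dfrac{\pi^2}{a|I_i|^2}$ large, hence gives a \emph{worse} upper bound. Concretely, from $|I_i|\le \ell/(k+1)$ you only get $\dfrac{\pi^2}{a|I_i|^2}\ge \dfrac{\pi^2(k+1)^2}{a\ell^2}$, and the chain $\alpha_{k,\ve}\le \dfrac{\pi^2}{a|I_i|^2}$ together with this lower bound on the right-hand side yields nothing. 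The displayed conclusion ``$\alpha_{k,\ve}\le \pi^2(k+1)^2/(a\ell^2)$'' does not follow.

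Your first instinct was the right one: by pigeonhole, at least one nodal interval has length $\ge \ell/k$ (in the paper's convention there are $k$ nodal domains, not $k+1$; the eigenfunction associated to $\C_{k,\ve}$ has $k-1$ interior zeros). The paper's proof simply picks such a \emph{long} nodal domain $I_\ve$. If $u_{k,\ve}>0$ there, then
\[
\alpha_{k,\ve}=\lambda_1^{m_\ve,I_\ve}\le \lambda_1^{a,I_\ve}=\frac{\pi^2}{a|I_\ve|^2}\le \frac{\pi^2 k^2}{a\ell^2},
\]
and the cone condition gives $\beta_{k,\ve}\le t^{-1}\alpha_{k,\ve}$; the negative case is symmetric. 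There is no combinatorial subtlety about matching the sign of the interval to the coordinate you want: whichever sign the long interval carries, you bound that coordinate directly and transfer to the other via $\K_t$. The ``delicate bookkeeping'' you flag at the end disappears once you use the long interval, and the constant comes out exactly as $t^{-1}\pi^2 k^2/(a\ell^2)$.
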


\begin{proof}
Let $u_{k,\ve}\in H^1_0([0,\ell])$ be a eigenfunction of \eqref{ecu} associated to $(\alpha_{k,\ve}, \beta_{k,\ve})\in  \C_{k,\ve}\cap \K_t$. Then $u_{k,\ve}$ has exactly $k$ nodal domains. Therefore there exists at least one nodal domain, $I_\ve$, such that $|I_\ve|\ge \frac{\ell}{k}$.

Assume that $u_{k,\ve}>0$ in $I_\ve$ (the other case can be treated similarly). Therefore, $u_{k,\ve}$ is a weak solution of
$$
\begin{cases}
-u_{k,\ve}'' = \alpha_{k,\ve} m_\ve u_{k,\ve} & \text{in } I_\ve\\
u_{k,\ve} = 0 & \text{on }\partial I_\ve.
\end{cases}
$$
So, $\alpha_{k,\ve}= \lambda_1^{m_\ve, I_\ve}$. Now, by Sturm's comparison Theorem, we get
$$
\alpha_{k,\ve} = \lambda_1^{m_\ve,I_\ve} \le \lambda_1^{a,I_\ve} =  \frac{\pi^2}{a|I_\ve|^2} \le \frac{\pi^2 k^2}{a\ell^2}.
$$
Since $(\alpha_{k,\ve}, \beta_{k,\ve})\in \K_t$ it follows that,
$$
\beta_{k,\ve}\le t^{-1}\alpha_{k,\ve}.
$$
This completes the proof.
\end{proof}

Finally, let us see that the nodal domains of an eigenfunction $u_{k,\ve}$ of \eqref{ecu} associated to $(\alpha_{k,\ve}, \beta_{k,\ve})\in \C_{k,\ve}$  do not degenerate when we pass to the limit $\ve\downarrow 0$ if the eigenvalues $(\alpha_{k,\ve}, \beta_{k,\ve})$ are confined to a cone $\K_t$.

\begin{thm}\label{cota.nodal.domain}
With the same notations and assumptions of the previous theorem, let $(\alpha_{k,\ve},\beta_{k,\ve})\in \C_{k,\ve}\cap \K_t$ and let $u_{k,\ve}\in H^1_0([0,\ell])$ be an eigenfunction of \eqref{ecu} associated to $(\alpha_{k,\ve},\beta_{k,\ve})$. Then, every nodal domain $I_\ve\subset [0,\ell]$ of $u_{k,\ve}$ verifies the bound
$$
|I_\ve|\ge \frac{\ell}{k}\sqrt{t\frac{a}{b}}.
$$
Moreover if we denote by $J_\ve$ two consecutive nodal domains, we have the bound
$$
|J_\ve|\ge \frac{\ell}{k}\sqrt{\frac{a}{b}} (1+\sqrt{t}).
$$
\end{thm}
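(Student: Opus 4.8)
The plan is to re-run the Sturm comparison argument from the proof of Theorem~\ref{cota.alpha.beta} in the \emph{opposite} direction: using the lower bound $\lambda_1^{r,I}\ge\lambda_1^{b,I}$ (valid since $r\le b$) in place of the upper bound, together with the a priori estimate $\max\{\alpha_{k,\ve},\beta_{k,\ve}\}\le t^{-1}\pi^2k^2/(a\ell^2)$ from Theorem~\ref{cota.alpha.beta} and the defining inequalities of the cone $\K_t$.

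For the first bound, fix a nodal domain $I_\ve$ of $u_{k,\ve}$ and assume $u_{k,\ve}>0$ on $I_\ve$ (the negative case is identical, with $\beta_{k,\ve}$ and $n_\ve$ replacing $\alpha_{k,\ve}$ and $m_\ve$). Restricting the equation to $I_\ve$ shows that $u_{k,\ve}|_{I_\ve}$ is a positive Dirichlet eigenfunction on $I_\ve$, so $\alpha_{k,\ve}=\lambda_1^{m_\ve,I_\ve}\ge\lambda_1^{b,I_\ve}=\pi^2/(b|I_\ve|^2)$ by Sturm comparison; hence $|I_\ve|\ge\pi/\sqrt{b\alpha_{k,\ve}}$, and inserting $\alpha_{k,\ve}\le t^{-1}\pi^2k^2/(a\ell^2)$ gives $|I_\ve|\ge\frac{\ell}{k}\sqrt{ta/b}$, as claimed.

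For the second bound write $J_\ve=I_\ve^+\cup I_\ve^-$, where $u_{k,\ve}$ is positive on $I_\ve^+$ and negative on $I_\ve^-$; the previous step gives $|I_\ve^+|\ge\pi/\sqrt{b\alpha_{k,\ve}}$ and $|I_\ve^-|\ge\pi/\sqrt{b\beta_{k,\ve}}$, so $|J_\ve|\ge\frac{\pi}{\sqrt b}\big(\alpha_{k,\ve}^{-1/2}+\beta_{k,\ve}^{-1/2}\big)$. To finish I need a sharp upper bound on \emph{both} eigenvalues, and this is the only place where one must be careful: bounding both of $\alpha_{k,\ve},\beta_{k,\ve}$ by $t^{-1}\pi^2k^2/(a\ell^2)$ would only yield the weaker constant $2\sqrt t$. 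Instead I recall (as in the proof of Theorem~\ref{cota.alpha.beta}) that $u_{k,\ve}$ has exactly $k$ nodal domains, so some $\tilde I_\ve$ has $|\tilde I_\ve|\ge\ell/k$; the \emph{upper} Sturm bound on $\tilde I_\ve$ then shows that whichever of $\alpha_{k,\ve},\beta_{k,\ve}$ matches the sign of $u_{k,\ve}$ on $\tilde I_\ve$ is at most $\pi^2k^2/(a\ell^2)$, while the cone inequality $t\alpha_{k,\ve}\le\beta_{k,\ve}\le t^{-1}\alpha_{k,\ve}$ bounds the other by $t^{-1}\pi^2k^2/(a\ell^2)$. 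Substituting these two bounds into the displayed inequality (the outcome is symmetric in the two cases) gives $|J_\ve|\ge\frac{\ell}{k}\sqrt{a/b}\,(1+\sqrt t)$. Apart from this bit of bookkeeping, the proof is a routine combination of Sturm comparison with elementary arithmetic, so I do not expect a genuine obstacle.
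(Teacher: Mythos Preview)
Your argument is correct and essentially identical to the paper's: both parts use the Sturm lower bound $\lambda_1^{r,I}\ge\pi^2/(b|I|^2)$ combined with the upper bounds on $\alpha_{k,\ve},\beta_{k,\ve}$ coming from the largest nodal domain and the cone condition. For the second bound the paper phrases the bookkeeping as ``WLOG $\alpha_{k,\ve}\le\beta_{k,\ve}$'' and then invokes Theorem~\ref{cota.alpha.beta} for the sharper estimate $\alpha_{k,\ve}\le\pi^2k^2/(a\ell^2)$, whereas you spell out the case distinction on the sign of $u_{k,\ve}$ on the largest nodal domain---this is in fact the justification hidden behind the paper's WLOG, so your version is slightly more explicit but otherwise the same.
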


\begin{proof}
Assume that $u_{k,\ve}>0$ in $I_\ve$ (the other case is analogous). Arguing as in the proof of Theorem \ref{cota.alpha.beta}, we have that $\alpha_{k,\ve} = \lambda_1^{m_\ve,J_\ve}$. So, by Sturm's comparison Theorem,
$$
\alpha_{k,\ve} = \lambda_1^{m_\ve, J_\ve}\ge \lambda_1^{b,J_\ve} = \frac{\pi^2}{b|I_\ve|^2}.
$$
Now, using the bound for $\alpha_{k,\ve}$ given in Theorem \ref{cota.alpha.beta}, we deduce
$$
\frac{\pi^2}{b|I_\ve|^2} \le \alpha_{k,\ve}\le  t^{-1} \frac{\pi^2 k^2}{a\ell^2},
$$
and the result follows.

Let now $I^+_\ve$ and $I_\ve^-$ be two consecutive nodal domains, such that $u_{k,\ve}>0$ in $I_\ve^+$ and $u_{k,\ve}^-<0$ in $I_\ve^-$. We can assume, without loss of generality, that $\alpha_{k,\ve}\le \beta_{k,\ve}$. Then, from Theorem \ref{cota.alpha.beta}, we have that
$$
\alpha_{k,\ve}\le \frac{\pi^2 k^2}{a \ell^2} \quad \text{and}\quad \beta_{k,\ve}\le t^{-1}\frac{\pi^2 k^2}{a \ell^2}.
$$ 
Then, arguing as in the first part of the proof, we obtain that
$$
|I_\ve^+|\ge \frac{\ell}{k} \sqrt{\frac{a}{b}}\quad \text{and}\quad |I_\ve^-|\ge \frac{\ell}{k} \sqrt{t\frac{a}{b}}.
$$
The result follows observing that $|J_\ve| = |I_\ve^+| + |I_\ve^-|$.
\end{proof}

With the help of Theorems \ref{conv.alpha.beta}, \ref{cota.alpha.beta} and \ref{cota.nodal.domain}, the proof of Theorem \ref{main} follows easily.

\begin{proof}[Proof of Theorem \ref{main}]
It only remains to see that if $\C_{k,\ve}\ni (\alpha_{k,\ve_j}, \beta_{k,\ve_j})\to (\alpha_{k,0}, \beta_{k,0})$ as $j\to\infty$, then $(\alpha_{k,0}, \beta_{k,0})\in \C_{k,0}$. This will follow if we show that an associated eigenfunction $u_{k,0}$ of \eqref{ecu_lim} associated to $(\alpha_{k,0}, \beta_{k,0})$ has $k$ nodal domains.

But, from Theorem \ref{conv.alpha.beta}, we know that $u_{k,\ve_j}\cd u_{k,0}$ weakly in $H^1_0([0,\ell])$, where $u_{k,\ve_j}$ is an eigenfunction of \eqref{ecu} associated to $(\alpha_{k,\ve_j}, \beta_{k,\ve_j})$ and $u_{k,0}$ is an eigenfunction of \eqref{ecu_lim} associated to $(\alpha_{k,0}, \beta_{k,0})$. Therefore, we know that $u_{k,0}$ has only finitely many zeroes  and then from Theorem \ref{cota.nodal.domain} we deduce that $u_{k,0}$ has exactly $k$ nodal domains.

This completes the proof.
\end{proof}

\section{An alternative formulation}

In order to prove the convergence result for periodic homogenization, Theorem \ref{main2}, it is convenient to consider the following equivalent problem
\begin{align}  \label{ec.1}
	\begin{cases}
	   -u''= \lam  (m(x)u^+-  t n(x)u^-) \qquad x\in (0,\ell)\\
	   u(0)=u(\ell)=0,
	\end{cases}
\end{align}
where $t>0$ is a fixed value. The values of $\lam\in\R$ for which \eqref{ec.1} has a
non-trivial solution $u$ are called \emph{half-eigenvalues}, while the corresponding
solutions $u$ are called \emph{half-eigenfunctions}. Problem \eqref{ec.1} has a
positively-homogeneous jumping nonlinearity, and its spectrum is defined as the set
$$
\Sigma_t(m,n):=\{\lam\in \R \, : \, \eqref{ec.1} \mbox{ has non-trivial solution }u \}.
$$

The set $\Sigma_t(m,n)$ is divided into two subsets $\Sigma_t(m,n) = \Sigma_t^+(m,n)\cup \Sigma_t^-(m,n)$ as
$$	
\Sigma_t^+(m,n) := \{\lam\in \Sigma_t(m,n)\colon u_\lambda'(0)>0\},\ \Sigma_t^-(m,n) := \{\lam\in \Sigma_t(m,n)\colon u_\lambda'(0)<0\}
$$
where $u_\lambda$ is an eigenfunction of \eqref{ec.1} associated to $\lambda$.

It is shown in \cite{LY09} that for any $t>0$ both sets $\Sigma_t^{\pm}(m,n)$ consists in a sequence converging to $+\infty$. We denote this sequences by $\{\lambda_{k,t}^\pm\}_{k\in\N}$.

Observe that $\lambda_{1,t}^+ = \lambda_1^{m, [0,\ell]}$ and $\lambda_{1,t}^- = \lambda_1^{tn, [0,\ell]}$. Moreover, each eigenvalue has a unique associated eigenfunction normalized by $\pm u'(0)=1$ and the eigenfunction corresponding to $\lam_{k,t}^\pm$ has precisely $k$ nodal domains on $(0,\ell)$, and $k+1$ simple zeros in $[0, \ell]$. See \cite{LY09} for a proof of these facts.

We can rewrite problems \eqref{ecu} and \eqref{ecu_lim}  by taking $\lam_\ve = \alpha_\ve$
and $\beta_\ve = t \alpha_\ve$, and we obtain the following problems
\begin{align} \tag{$P_\ve^t$} \label{ec.1h}
\begin{cases}
-u''= \lam  (m_\ve(x) u^+ -  t n_\ve(x )u^-) & x\in (0,\ell)\\
u(0) = u(\ell)=0,
\end{cases}
\end{align}
for $\ve\ge 0$.

We denote the eigenvalues of \eqref{ec.1h} by $\{\lambda_{k,t,\ve}^\pm\}_{k\in\N}$.

Now, Theorem \ref{main} trivially implies the following
\begin{thm} \label{homog.t.fijo}
Let $\{m_\ve\}_{\ve>0}$ and $\{n_\ve\}_{\ve>0}$ be two families of weights in $L^\infty([0,\ell])$   satisfying \eqref{cotas.pesos}. Assume, moreover that $m_\ve\stackrel{*}{\cd} m_0$ and $n_\ve\stackrel{*}{\cd} n_0$ weakly* in $L^\infty([0,\ell])$ for some limit weights $m_0$ and $n_0$.
 
Let us denote by $\{\lam_{k,t,\ve}^\pm\}_{k\in\N}$ the eigenvalues of \eqref{ec.1h} for $\ve\ge 0$. Then
$$
\lim_{\ve \to 0}\lam_{k,t,\ve}^\pm = \lam_{k,t,0}^\pm.
$$
\end{thm}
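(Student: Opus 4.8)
The plan is to reduce this statement to an application of Theorem \ref{main}. The key observation is that the family \eqref{ec.1h} is nothing but the family \eqref{ecu} restricted to the ray $\beta = t\alpha$ in the first quadrant: a pair $\lambda$ is a half-eigenvalue of \eqref{ec.1h} of sign $\pm$ if and only if $(\alpha,\beta) = (\lambda, t\lambda) \in \mathcal{C}_{k,\ve}^\pm$ for the appropriate index $k$ (namely, the $k$ that counts the nodal domains of the associated eigenfunction). Indeed, setting $\alpha_\ve = \lambda_{k,t,\ve}^\pm$ and $\beta_\ve = t\lambda_{k,t,\ve}^\pm$ gives a point of $\mathcal{C}_{k,\ve}^\pm$, and conversely every point of $\mathcal{C}_{k,\ve}^\pm$ on this ray arises this way; this is exactly the parametrization $\mathcal{C}_{k,\ve}^\pm = \bigcup_{t>0}\{(\lambda_{k,t,\ve}^\pm, t\lambda_{k,t,\ve}^\pm)\}$ recorded before Theorem \ref{main2}, whose existence and uniqueness of $\lambda_{k,t,\ve}^\pm$ rely on \cite{LY09}.

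The second point is that the ray $\beta = t\alpha$ sits inside the cone $\K_s$ for $s = \min\{t, t^{-1}\} \in (0,1)$ (or any slightly smaller $s$ if one wants strict interiority), so the hypotheses of Theorem \ref{main} are met: the sequence $(\alpha_\ve, \beta_\ve) = (\lambda_{k,t,\ve}^\pm, t\lambda_{k,t,\ve}^\pm)$ lies in $\mathcal{C}_{k,\ve}^\pm \cap \K_s$. First I would invoke Theorem \ref{main} to conclude that $\{(\lambda_{k,t,\ve}^\pm, t\lambda_{k,t,\ve}^\pm)\}_{\ve>0}$ is bounded in $\R^2$ (hence $\{\lambda_{k,t,\ve}^\pm\}$ is bounded in $\R$), and that every accumulation point of this sequence lies in $\mathcal{C}_{k,0}^\pm \cap \K_s$. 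Any accumulation point of $\{\lambda_{k,t,\ve}^\pm\}$ is of the form $\lambda_*$ with $(\lambda_*, t\lambda_*) \in \mathcal{C}_{k,0}^\pm$; but by the uniqueness of the intersection of $\mathcal{C}_{k,0}^\pm$ with the ray $\beta = t\alpha$, necessarily $\lambda_* = \lambda_{k,t,0}^\pm$. Since every accumulation point of the bounded sequence $\{\lambda_{k,t,\ve}^\pm\}_{\ve>0}$ equals the single value $\lambda_{k,t,0}^\pm$, the whole family converges: $\lim_{\ve\to0}\lambda_{k,t,\ve}^\pm = \lambda_{k,t,0}^\pm$.

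I do not anticipate a serious obstacle here, since Theorem \ref{main} does all the analytic work; the only point requiring care is the bookkeeping that identifies half-eigenvalues of \eqref{ec.1h} with points of the Fu\v{c}\'\i k curves $\mathcal{C}_{k,\ve}^\pm$ at the correct index $k$, together with the two facts — each from \cite{LY09} — that the half-eigenfunction for $\lambda_{k,t,\ve}^\pm$ has exactly $k$ nodal domains, and that $\lambda_{k,t,\ve}^\pm$ is the unique value on the ray realizing that nodal count. Once this dictionary is in place, the convergence is immediate from the "every accumulation point is the prescribed limit, and the sequence is bounded" argument.
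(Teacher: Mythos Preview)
Your proposal is correct and is exactly the argument the paper has in mind: the text states that Theorem~\ref{main} ``trivially implies'' Theorem~\ref{homog.t.fijo} and gives no further proof, and you have simply spelled out the natural dictionary between half-eigenvalues of \eqref{ec.1h} and points of $\mathcal{C}_{k,\ve}^\pm$ on the ray $\beta=t\alpha$, together with the boundedness/uniqueness-of-accumulation-point routine.
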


Now we specialize to the periodic case, and obtain the following refinement.
\begin{thm} \label{homog.t.fijo.period}
In addition to the assumptions of Theorem \ref{homog.t.fijo}, assume that $m_\ve(x) = m(\tfrac{x}{\ve})$ and $n_\ve(x) = n(\tfrac{x}{\ve})$ for some $\ell-$periodic functions $m, n\in L^\infty(\R)$. Then, for $0<t<1$, 
$$
|\lam_{k,t,\ve}^\pm - \lam_{k,t,0}^\pm| \leq  C \left(\frac{k}{\ell}\right)^3 t^{-\frac32} \ve,
$$
where $C$ depends onlyt on $a, b$ in \eqref{cotas.pesos}.
\end{thm}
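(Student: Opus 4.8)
The plan is to exploit the nodal structure of the half-eigenfunctions, exactly as announced in the introduction, reducing the problem to a quantitative homogenization estimate for the \emph{linear} Dirichlet eigenvalue problem on a (short) interval. I shall treat $\lambda_{k,t,\ve}^{+}$; the case $\lambda_{k,t,\ve}^{-}$ is identical after interchanging the roles of the positive and negative nodal domains. Put $\lambda_\ve:=\lambda_{k,t,\ve}^{+}$ and let $u_\ve$ be a corresponding eigenfunction of \eqref{ec.1h}. By \cite{LY09}, $u_\ve$ has exactly $k$ nodal domains $I_1^\ve,\dots,I_k^\ve$, with simple zeros and alternating sign; the number $p$ of positive and $q=k-p$ of negative ones depends only on $k$, hence is the same for all $\ve\ge0$. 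On a nodal domain $I$ where $u_\ve>0$, the restriction $u_\ve|_I$ is a positive solution of $-w''=\lambda_\ve m_\ve w$ vanishing on $\partial I$, hence a first Dirichlet eigenfunction, so $\lambda_\ve=\lambda_1^{m_\ve,I}$; likewise $\lambda_\ve=\lambda_1^{tn_\ve,I}$ on a negative domain (whose mean weight is $t\bar n$). For $\ve=0$ the coefficients are constant, so these identities become $|I_j^{0}|=\pi/\sqrt{\bar m\lambda_0}$ on the positive and $|I_j^{0}|=\pi/\sqrt{t\bar n\lambda_0}$ on the negative domains, with $\lambda_0:=\lambda_{k,t,0}^{+}$; summing over $j$ gives $\ell=\pi S/\sqrt{\lambda_0}$, where $S:=p/\sqrt{\bar m}+q/\sqrt{t\bar n}$.

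The core ingredient is a quantitative homogenization estimate on an interval: there is $C=C(a,b)$ such that for every subinterval $J\subset[0,\ell]$ and every $\ell$-periodic weight $r$ with $a\le r\le b$, writing $r_\ve(x)=r(x/\ve)$ and $\bar r=\tfrac1\ell\int_0^\ell r$,
\[
\Big|\lambda_1^{r_\ve,J}-\frac{\pi^{2}}{\bar r\,|J|^{2}}\Big|\le C\,\frac{\ve}{|J|^{3}}.
\]
For the upper bound I would test the Rayleigh quotient of $\lambda_1^{r_\ve,J}$ against $w(x)=\sin\big(\pi(x-\inf J)/|J|\big)$ (the first eigenfunction of the averaged problem on $J$): since $\int_J|w'|^{2}=\pi^{2}/(2|J|)$, it remains to check $\int_J r_\ve w^{2}=\tfrac12\bar r|J|+O(\ve)$, which follows by integration by parts because the primitive $x\mapsto\int_{\inf J}^{x}(r_\ve-\bar r)$ is $O(\ve)$. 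For the lower bound I would feed the $\ve$-eigenfunction $w$ on $J$ into the \emph{averaged} Rayleigh quotient, $\pi^{2}/(\bar r|J|^{2})\le\dfrac{\int_J|w'|^{2}}{\int_J\bar r w^{2}}=\lambda_1^{r_\ve,J}\,\dfrac{\int_J r_\ve w^{2}}{\int_J\bar r w^{2}}$, and estimate $\int_J(r_\ve-\bar r)w^{2}$ by integration by parts (boundary terms vanish since $w=0$ on $\partial J$), using $\int_J|ww'|\le(\int_J w^{2})^{1/2}(\int_J|w'|^{2})^{1/2}$, $\int_J|w'|^{2}=\lambda_1^{r_\ve,J}\int_J r_\ve w^{2}$ and the crude Sturm bound $\lambda_1^{r_\ve,J}\le\pi^{2}/(a|J|^{2})$.

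Granting this, I would apply it on each nodal domain of $u_\ve$ and write $|I_j^\ve|=\pi(\bar r_j\lambda_\ve)^{-1/2}(1+\theta_j)$ with $\bar r_j\in\{\bar m,\,t\bar n\}$ and $|\theta_j|\le C\ve/|I_j^\ve|$. By Theorem \ref{cota.nodal.domain}, $|I_j^\ve|\ge\frac{\ell}{k}\sqrt{ta/b}$ for all $j$, and its proof gives the sharper bound $|I_j^\ve|\ge\frac{\ell}{k}\sqrt{a/b}$ on the negative domains; moreover, running the Sturm argument of Theorem \ref{cota.alpha.beta} on the smallest nodal domain yields $\lambda_\ve\ge\pi^{2}k^{2}/(b\ell^{2})$, hence $\pi(\bar r_j\lambda_\ve)^{-1/2}\le C\ell/k$ on positive and $\le C\ell/(k\sqrt t)$ on negative domains. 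Combining, for $\ve$ small enough that $|\theta_j|\le\tfrac12$ (say $\ve\le c\sqrt t\,\ell/k$; for larger $\ve$ the asserted bound exceeds $\lambda_\ve+\lambda_0$ and is trivial),
\[
\big|\,|I_j^\ve|-\pi(\bar r_j\lambda_\ve)^{-1/2}\,\big|=\pi(\bar r_j\lambda_\ve)^{-1/2}|\theta_j|\le C\,\ve\,t^{-1/2}\qquad(j=1,\dots,k).
\]
Summing over $j$, using $\sum_j|I_j^\ve|=\ell$ together with $\sum_j\pi(\bar r_j\lambda_\ve)^{-1/2}=\pi S/\sqrt{\lambda_\ve}=\ell\sqrt{\lambda_0/\lambda_\ve}$ (from $\pi S=\ell\sqrt{\lambda_0}$), I get $\ell\,|1-\sqrt{\lambda_0/\lambda_\ve}|\le Ck\ve\,t^{-1/2}$, i.e. $|\sqrt{\lambda_\ve}-\sqrt{\lambda_0}|\le C\frac{k\ve}{\ell}t^{-1/2}\sqrt{\lambda_\ve}$, whence
\[
|\lambda_\ve-\lambda_0|=|\sqrt{\lambda_\ve}-\sqrt{\lambda_0}|(\sqrt{\lambda_\ve}+\sqrt{\lambda_0})\le C\frac{k\ve}{\ell}t^{-1/2}\,\sqrt{\lambda_\ve}(\sqrt{\lambda_\ve}+\sqrt{\lambda_0})\le C\Big(\frac{k}{\ell}\Big)^{3}t^{-3/2}\ve,
\]
using $\sqrt{\lambda_\ve}(\sqrt{\lambda_\ve}+\sqrt{\lambda_0})\le2\max\{\lambda_\ve,\lambda_0\}\le2t^{-1}\pi^{2}k^{2}/(a\ell^{2})$ from Theorem \ref{cota.alpha.beta}.

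I expect the main obstacle to be the linear homogenization estimate, and in particular its lower bound: one needs the sharp relative rate $O(\ve/|J|)$ \emph{uniformly} in the position and the length of $J$, which is exactly what the integration-by-parts argument above is designed to supply without a corrector expansion. A secondary subtlety is that all of the estimates are applied on the $\ve$-dependent nodal domains, so it is essential that these do not collapse — the content of Theorem \ref{cota.nodal.domain} — and it is the shortest of them, namely those carrying the weight $m_\ve$ (of length $\gtrsim\sqrt t\,\ell/k$), that produce the factor $t^{-3/2}$: one power $t^{-1/2}$ from $\ve/|I_j^\ve|$ on those domains, and a further $t^{-1}$ from the size $\lambda_\ve\lesssim t^{-1}(k/\ell)^{2}$ of the eigenvalue.
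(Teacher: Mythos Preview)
Your argument is correct and follows a genuinely different route from the paper's proof.

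The paper groups nodal domains in consecutive pairs $J_\ve=I_{1,\ve}^{+}\cup I_{1,\ve}^{-}$, observes that $\lambda_{k,t,\ve}^{+}=\lambda_2^{m_\ve,tn_\ve,J_\ve}$ (the second, i.e.\ first nontrivial, Fu{\v c}{\'\i}k eigenvalue on $J_\ve$), rescales $J_\ve$ to $[0,1]$, and then invokes an \emph{external} quantitative result for the first nontrivial Fu{\v c}{\'\i}k curve (Theorem~\ref{conv.primera.curva}, due to \cite{Salort}). The remaining error comes from $|c_\ve^{-2}-c_0^{-2}|$, which is controlled via Lemma~\ref{dominios.nodales.epsilon}, Lemma~\ref{lema.numeritos} and Corollary~\ref{conv.dominios.nodales}: periodicity forces all positive (resp.\ negative) nodal domains of $u_{k,t,\ve}$ to have the same length up to $2\ve$, so $|J_\ve|$ is within $4\ve$ of $2\ell/k$.

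You instead work on each nodal domain \emph{individually}, reduce to the \emph{linear} first Dirichlet eigenvalue, and prove the needed homogenization estimate $|\lambda_1^{r_\ve,J}-\pi^2/(\bar r|J|^2)|\le C\ve/|J|^3$ from scratch via Rayleigh quotients and a single integration by parts. Summing the resulting length estimates $\big||I_j^\ve|-\pi(\bar r_j\lambda_\ve)^{-1/2}\big|\le C\ve t^{-1/2}$ against the exact identity $\sum_j\pi(\bar r_j\lambda_0)^{-1/2}=\ell$ replaces the paper's combinatorial Lemmas~\ref{dominios.nodales.epsilon}--\ref{lema.numeritos} and Corollary~\ref{conv.dominios.nodales} entirely.

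What each approach buys: the paper's route is shorter \emph{given} the black box from \cite{Salort}, and its periodicity lemma (Lemma~\ref{dominios.nodales.epsilon}) is an appealing structural fact in its own right. Your route is fully self-contained and more elementary---it never needs a Fu{\v c}{\'\i}k-type estimate, only the linear one---and it makes transparent where each power of $t^{-1/2}$ enters (one from the short positive nodal domains, one from $\lambda_\ve\lesssim t^{-1}(k/\ell)^2$). A minor bookkeeping point: since $m,n$ are $\ell$-periodic, the primitive $\int(r_\ve-\bar r)$ is $O(\ell\ve)$ rather than $O(\ve)$, so your constant $C$ a priori carries an $\ell$; this is harmless after the usual non-dimensionalization $x\mapsto x/\ell$ (the paper is equally informal on this point).
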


Observe that Theorem \ref{main2} follows directly from Theorem \ref{homog.t.fijo.period}. In fact, Theorem \ref{homog.t.fijo.period} is Theorem \ref{main2} in the case $0<t<1$ and the case where $t>1$ follows from this one by symmetry. To be precise, if $t>1$ and $u_\ve$ is an eigenfunction associated to $\lambda_{k,t,\ve}^\pm$, we denote $v_\ve=-u_\ve$ and so it verifies
$$
\begin{cases}
-v_\ve'' = t\lambda_{k,t,\ve}^\pm (n_\ve v_\ve^+ - t^{-1} m_\ve v_\ve^-) & \text{ in } (0,\ell)\\
v(0)=v(\ell)=0.
\end{cases}
$$

Therefore, from Theorem \ref{homog.t.fijo.period} we have the estimate
$$
|t\lambda_{k,t,\ve}^\pm - t\lambda_{k,t,0}^\pm|\le C\left(\frac{k}{\ell}\right)^3 t^{\frac32} \ve,
$$
and Theorem \ref{main2} follows directly from this former inequality.

For the proof of Theorem \ref{homog.t.fijo.period}, we need the order of convergence of the nodal domains of the eigenfunctions. We will perform this task in a series of lemmas.

\begin{lema}\label{dominios.nodales.epsilon}
Let $(\lambda^{\pm}_{k,t,\ve}, u_{k,t,\ve})$ be an eigenpair of \eqref{ec.1h}. We denote by $\{I_{j,\ve}^+\}_{j}\cup \{I_{i,\ve}^-\}_{i}$ the nodal domains of $u_{k,t,\ve}$, that is each $I_{l,\ve}^\pm$ is an open connected, pairwise disjoint intervals, such that
$$
[0,\ell] = \bigcup_{j} \overline{I_{j,\ve}^+} \cup \bigcup_{i} \overline{I_{i,\ve}^-}, 
$$
and $u_{k,t,\ve}>0$ on $I_{j,\ve}^+$, $u_{k,t,\ve}<0$ on $I_{i,\ve}^-$.

Then, $||I_{j,\ve}^+| - |I_{l,\ve}^+||<2\ve$ and $||I_{i,\ve}^-| - |I_{l,\ve}^-||<2\ve$
\end{lema}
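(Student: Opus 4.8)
The plan is to exploit the fact that on each nodal domain the half-eigenfunction solves a linear Dirichlet problem with a periodic weight of period $\ell$ — specifically, on a nodal domain where $u_{k,t,\ve}>0$ it solves $-u''=\lambda^\pm_{k,t,\ve}\, m(\tfrac{x}{\ve})\,u$, and where it is negative it solves $-u''=t\lambda^\pm_{k,t,\ve}\, n(\tfrac{x}{\ve})\,u$. Since all the positive nodal domains $I^+_{j,\ve}$ correspond to the \emph{same} linear operator (same $\lambda$, same weight $m_\ve$) with zero Dirichlet data at their endpoints, each $|I^+_{j,\ve}|$ is exactly the distance between two consecutive zeros of a solution of the fixed equation $-u''=\lambda^\pm_{k,t,\ve} m(\tfrac{x}{\ve})u$; equivalently, $|I^+_{j,\ve}|$ equals the length of an interval on which the first Dirichlet eigenvalue of $-u''=\mu\, m(\tfrac{x}{\ve})u$ equals $\lambda^\pm_{k,t,\ve}$. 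The key structural input is that the function $L\mapsto \lambda_1^{m_\ve,[x_0,x_0+L]}$ is strictly decreasing in $L$, so the condition $\lambda_1^{m_\ve,[x_0,x_0+L]}=\lambda^\pm_{k,t,\ve}$ determines $L=L(x_0)$ uniquely for each left endpoint $x_0$. Thus the whole issue reduces to controlling how much $L(x_0)$ can vary as $x_0$ ranges over $[0,\ell]$.

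First I would record the elementary monotonicity and continuity of $L(x_0)$: because $m_\ve$ has period $\ell\ve$ (as a function of $x$, since $m$ is $\ell$-periodic), the first eigenvalue $\lambda_1^{m_\ve,[x_0,x_0+L]}$ is $\ell\ve$-periodic in $x_0$, hence $L(x_0)$ is $\ell\ve$-periodic as well — wait, one must be careful: the paper writes $m_\ve(x)=m(x/\ve)$ with $m$ being $\ell$-periodic, so $m_\ve$ has period $\ell\ve$. I would then compare two positive nodal domains $I^+_{j,\ve}=[p,p+L_j]$ and $I^+_{l,\ve}=[q,q+L_l]$. Translating the second interval by an integer multiple of the period $\ell\ve$ (using periodicity of $m_\ve$) we may assume $|p-q|<\ell\ve$. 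The plan for the core estimate is a Sturm-type sliding argument: shifting the left endpoint by $|p-q|$ changes the required length by at most $|p-q|<\ell\ve$ — but this gives $|L_j-L_l|<\ell\ve$, not $2\ve$.

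So the better route, and the one I expect the authors use, is to work directly with zeros. Consider the solution $w$ of $-w''=\lambda^\pm_{k,t,\ve}m_\ve w$ with $w(p)=0$, $w>0$ on $(p,p+L_j)$; its next zero after $p$ is $p+L_j$. By Sturm comparison between the interval $[p,p+L_j]$ for the operator with weight $m_\ve$ and the constant-weight operators with weights $a$ and $b$, one gets $\tfrac{\pi}{\sqrt{b\,\lambda}}\le L_j\le \tfrac{\pi}{\sqrt{a\,\lambda}}$ where $\lambda=\lambda^\pm_{k,t,\ve}$, and the same bounds for $L_l$; but the difference of these two bounds is of order $1/\sqrt\lambda$, not $\ve$, so this alone is too crude. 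The sharp point is that $w$ oscillates with a "local half-period" that, integrated against the oscillating weight, washes out: on any interval of length $\ell\ve$ the weight $m_\ve$ traverses exactly one full period, so the phase accumulated by $w$ over one period of $m_\ve$ is a fixed quantity independent of where that period starts. Concretely, I would use the Prüfer-type angle $\theta$ with $\theta'=\sqrt{\lambda m_\ve}\cos^2\theta+\sin^2\theta/\sqrt{\lambda m_\ve}$ (or the simpler first-order reduction) and observe that $\theta$ increases by $\pi$ over each nodal domain; between the accumulated-phase equations for $I^+_{j,\ve}$ and for $I^+_{l,\ve}$, the only discrepancy comes from an initial and a final partial period of $m_\ve$, each of which is an interval of length $<\ell\ve$ on which $\theta$ changes by $O(1)$ — and crucially the integrands differ on a set of measure $<2\ell\ve$. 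Dividing by the (bounded below) phase speed produces $|L_j-L_l|<C\ell\ve$; chasing the constants with the normalization $|I|\ge \tfrac{\ell}{k}\sqrt{t a/b}$ from Theorem~\ref{cota.nodal.domain} and the eigenvalue bound from Theorem~\ref{cota.alpha.beta} should yield exactly the stated $2\ve$ (the factor $\ell$ presumably gets absorbed because the relevant period in the rescaled picture is $\ve$, not $\ell\ve$ — I suspect the intended reading of the periodicity is that $m$ has period $1$, making $m_\ve$ of period $\ve$, which is the only way the clean constant $2$ appears). The negative nodal domains are handled identically with $\lambda$ replaced by $t\lambda$ and $m$ by $n$.

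The main obstacle is getting the constant exactly equal to $2$ rather than some larger absolute constant: this requires being careful that the "partial period" contributions at the two ends of a nodal domain combine to span strictly less than two full periods of the weight (hence a translate-and-count argument showing that two nodal domains of the same sign differ by at most the length of one period at each end, giving the $2\ve$), and that the argument is set up in the variable where the weight period is $\ve$. I would present the zero-counting/Prüfer version since it is the most transparent route to the sharp constant, but note that a purely Sturm-comparison proof comparing $I^+_{j,\ve}$ with an integer-period translate of $I^+_{l,\ve}$ also works and may be what the authors intend.
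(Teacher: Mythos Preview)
Your main line of attack---the Pr\"ufer-angle/phase-accumulation argument---is not what the paper does, and as you yourself note it does not cleanly produce the sharp constant~$2$. The paper's proof is the short variational argument you mention only in passing at the very end (``a purely Sturm-comparison proof comparing $I^+_{j,\ve}$ with an integer-period translate of $I^+_{l,\ve}$ also works''), and it is worth seeing how clean it is.

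The paper argues by contradiction. Let $I^+_{j_0,\ve}$ be a positive nodal domain of minimal length, and suppose some $I^+_{j,\ve}$ satisfies $|I^+_{j,\ve}|>|I^+_{j_0,\ve}|+2\ve$. Then one can find an integer $h$ such that the translate $h\ve + I^+_{j_0,\ve}$ sits strictly inside $I^+_{j,\ve}$. Define $v_\ve(x)=u_{k,t,\ve}(x-h\ve)$ on that translate and extend by zero to all of $I^+_{j,\ve}$. By periodicity of $m_\ve$ (under translation by $h\ve$), the Rayleigh quotient of $v_\ve$ on $I^+_{j,\ve}$ equals the Rayleigh quotient of $u_{k,t,\ve}$ on $I^+_{j_0,\ve}$, which is exactly $\lambda^\pm_{k,t,\ve}=\lambda_1^{m_\ve,I^+_{j,\ve}}$. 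Hence $v_\ve$ is a minimizer and therefore a first Dirichlet eigenfunction on $I^+_{j,\ve}$, but it vanishes on a set of positive measure---contradiction. That is the whole proof; no phase analysis, no Sturm comparison with constant weights, and the constant $2$ drops out of the pigeonhole step that guarantees the integer $h$ exists.

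So the gap in your write-up is that you spend almost all of it on an approach that does not reach the stated conclusion, and only allude to the correct one without executing it. Your confusion about the period ($\ve$ versus $\ell\ve$) is legitimate---the paper translates by $h\ve$, which is consistent with reading the period of $m_\ve$ as $\ve$---but this is a normalization issue, not a mathematical obstacle; the same argument works with $2\ell\ve$ in place of $2\ve$ if one insists on $m$ being $\ell$-periodic.
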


\begin{proof}
We make the proof for the positive nodal domains $\{I_{j,\ve}^+\}_j$. The other one is analogous.

First, let $j_0$ be such that $|I_{j_0,\ve}^+|\le |I_{j,\ve}^+|$ for any $j$.

Assume that there exists $j$ such that $|I_{j,\ve}^+|>|I_{j_0,\ve}^+| + 2\ve$. Then, there exists an integer $h\in\Z$ such that $h\ve + I_{j_0,\ve}^+\subset I_{j,\ve}^+$.

Now, if we denote 
$$
v_\ve(x) = \begin{cases}
u_{k,t,\ve}(x-h\ve) & \text{if } x\in I_{j_0,\ve}^+ + h\ve\\
0 & \text{elsewhere},
\end{cases}
$$
then $v_\ve\in H^1_0(I_{j,\ve}^+)$, and so
\begin{align*}
\lambda_{k,t,\ve}^+ = \lambda_1^{m_\ve, I_{j,\ve}^+} & = \inf_{v\in H^1_0(I_{j,\ve}^+)} \frac{\int_{I_{j,\ve}^+} (v')^2\, dx}{\int_{I_{j,\ve}^+} m_\ve v^2\, dx}\\
&\le \frac{\int_{I_{j,\ve}^+} (v_\ve')^2\, dx}{\int_{I_{j,\ve}^+} m_\ve v_\ve^2\, dx}\\
&= \frac{\int_{I_{j_0,\ve}^+} (u_{k,t,\ve}')^2\, dx}{\int_{I_{j_0,\ve}^+} m_\ve u_{k,t,\ve}^2\, dx}\\
&= \lambda_{k,t,\ve}^+,
\end{align*}
where we have used the periodicity of the weight $m_\ve$. This shows that $v_\ve$ is an eigenfunction associated to $\lambda_1^{m_\ve, I_{j,\ve}^+}$, but this is a contradiction since $v_\ve$ vanishes in a set of positive measure.

The proof is complete.
\end{proof}

The following elementary lemma will be most useful.
\begin{lema}\label{lema.numeritos}
Let $M\in \R$ and $K\in\N$. Assume that for every $\ve>0$, there exists $\{a_i^\ve\}_{i=1}^K\subset\R$, such that
$$
\sum_{i=1}^K a_i^\ve = M \quad \text{and}\quad |a_i^\ve - a_j^\ve|< \ve, \text{ for every } 1\le i,j\le K.
$$
Then
$$
\left|a_i^\ve - \frac{M}{K}\right| < \ve, \text{ for every } 1\le i\le K.
$$
\end{lema}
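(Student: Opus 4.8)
The plan is to fix $\ve>0$ and an index $i$, and to exploit the constraint $\sum_{j=1}^K a_j^\ve = M$ to rewrite the deviation $a_i^\ve - M/K$ purely in terms of the pairwise differences $a_i^\ve - a_j^\ve$, which are controlled by hypothesis. Concretely, since $M/K = \frac1K\sum_{j=1}^K a_j^\ve$, we have
$$
a_i^\ve - \frac{M}{K} = a_i^\ve - \frac1K\sum_{j=1}^K a_j^\ve = \frac1K\sum_{j=1}^K\bigl(a_i^\ve - a_j^\ve\bigr).
$$
The key observation is that the $j=i$ summand vanishes, so the sum effectively ranges over the $K-1$ indices $j\neq i$.

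Applying the triangle inequality and the bound $|a_i^\ve - a_j^\ve|<\ve$ then gives
$$
\left|a_i^\ve - \frac{M}{K}\right| \le \frac1K\sum_{j\neq i}\bigl|a_i^\ve - a_j^\ve\bigr| < \frac{K-1}{K}\,\ve < \ve,
$$
which is exactly the desired conclusion. I do not expect any obstacle here: the only point worth flagging is that one should sum over $j\neq i$ (or equivalently note that the $j=i$ term contributes $0$) rather than over all $j$, which keeps the strict inequality intact and in fact yields the slightly sharper constant $(K-1)/K$. Since the argument holds for every $i$ and every $\ve$, the proof is complete.
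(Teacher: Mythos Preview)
Your proof is correct. It differs from the paper's argument: the paper takes an extremal approach, choosing indices $i_0$ and $i_1$ where $a_i^\ve$ is minimal and maximal respectively, observing that $a_{i_0}^\ve\le M/K\le a_{i_1}^\ve$, and then bounding every other $a_i^\ve$ within $\ve$ of these extremes. Your averaging identity $a_i^\ve - M/K = \tfrac{1}{K}\sum_{j\neq i}(a_i^\ve-a_j^\ve)$ is arguably cleaner and, as you note, yields the sharper constant $(K-1)/K$ rather than $1$; the paper's route is slightly more hands-on but equally elementary. Both arguments are essentially one line and neither offers a real advantage beyond taste.
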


\begin{proof}
Let $i_0 = i_0(\ve)\in \{1,\dots,K\}$ be such that $a_{i_0}^\ve\le a_i^\ve$ for every $1\le i\le K$. Then
$$
Ka_{i_0}^\ve \le \sum_{i=1}^K a_i^\ve = M,
$$
and so $a_{i_0}^\ve\le \frac{M}{K}$. Therefore, for any $1\le i\le K$, 
$$
a_i^\ve< a_{i_0}^\ve + \ve \le \frac{M}{K} + \ve.
$$
On the other hand, if we now take $i_1 = i_1(\ve) \in \{1,\dots,K\}$ such that $a_{i_1}^\ve\ge a_i^\ve$ for every $1\le i\le K$, then
$$
Ka_{i_i}^\ve \ge \sum_{i=1}^K a_i^\ve = M,
$$
thus $a_{i_1}^\ve\ge \frac{M}{K}$. Hence, for any $1\le i\le K$,
$$
a_i^\ve\ge a_{i_1}^\ve - \ve \ge \frac{M}{K} - \ve.
$$
The lemma is proved.
\end{proof}

Lemma \ref{dominios.nodales.epsilon} together with Lemma \ref{lema.numeritos} imply the following corollary.
\begin{cor}\label{conv.dominios.nodales}
Let $(\lambda^{\pm}_{k,t,\ve}, u_{k,t,\ve})$ be an eigenpair of \eqref{ec.1h}. We denote by $\{I_{j,\ve}^+\}_{j}\cup \{I_{i,\ve}^-\}_{i}$ the nodal domains of $u_{k,t,\ve}$. Then 
$$
\left| |I_{j,\ve}^+\cup I_{j,\ve}^-| - \frac{2\ell}{k}\right| \le 4\ve.
$$
\end{cor}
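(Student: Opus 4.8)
The plan is to read the corollary off from Lemma \ref{dominios.nodales.epsilon} and Lemma \ref{lema.numeritos}, applying the latter separately to the lengths of the positive nodal domains and of the negative nodal domains and then adding the two resulting estimates. First I would record two structural facts about the eigenfunction $u_{k,t,\ve}$: by the description of the half-eigenfunctions recalled earlier in this section it has exactly $k$ nodal domains, consecutive ones have opposite signs, and (when $k$ is even) this forces exactly $k/2$ positive domains $\{I_{j,\ve}^+\}$ and $k/2$ negative domains $\{I_{i,\ve}^-\}$; moreover their closures cover $[0,\ell]$ and the domains are pairwise disjoint, so
\[
\sum_{j}|I_{j,\ve}^+| + \sum_{i}|I_{i,\ve}^-| = \ell .
\]

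Then I would apply Lemma \ref{lema.numeritos} to the family $\{|I_{j,\ve}^+|\}_{j=1}^{k/2}$. By Lemma \ref{dominios.nodales.epsilon} any two of these numbers differ by less than $2\ve$, and their sum is $M_\ve^+ := \sum_j |I_{j,\ve}^+|$, so the lemma (with $K=k/2$ and the role of ``$\ve$'' played by $2\ve$) gives $\big||I_{j,\ve}^+| - \tfrac{2M_\ve^+}{k}\big| < 2\ve$ for every $j$. The same reasoning applied to $\{|I_{i,\ve}^-|\}_{i=1}^{k/2}$ gives $\big||I_{i,\ve}^-| - \tfrac{2M_\ve^-}{k}\big| < 2\ve$, with $M_\ve^- := \sum_i |I_{i,\ve}^-|$. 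Adding one positive and one negative estimate and using $M_\ve^+ + M_\ve^- = \ell$ yields
\[
\Big| |I_{j,\ve}^+ \cup I_{i,\ve}^-| - \frac{2\ell}{k} \Big| = \Big| |I_{j,\ve}^+| + |I_{i,\ve}^-| - \frac{2(M_\ve^+ + M_\ve^-)}{k} \Big| < 4\ve ,
\]
which is the asserted bound.

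The delicate point here is not analytic but combinatorial: everything hinges on the number of positive nodal domains being equal to the number of negative ones, since that is precisely what converts the per-family average $\ell/(k/2)$ into the value $2\ell/k$ attached to a mixed pair, and it is here that the parity of $k$ intervenes. For odd $k$ the two extreme nodal domains share a sign, one domain remains unpaired, and the identical computation bounds the pair length within $4\ve$ of $2(\ell - |I_\ve^{\mathrm{last}}|)/(k-1)$ instead; in the proof of Theorem \ref{homog.t.fijo.period} one then either restricts to even $k$ or propagates this slightly altered constant. Apart from this bookkeeping, the argument is a mechanical substitution into the two lemmas, so I do not foresee any real obstacle.
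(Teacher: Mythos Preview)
Your argument for even $k$ is correct and is essentially a reordering of the paper's own proof: the paper first pairs a positive and the adjacent negative nodal domain, sets $a_j^\ve = |I_{j,\ve}^+\cup I_{j,\ve}^-|$, gets $|a_i^\ve - a_j^\ve|<4\ve$ from Lemma~\ref{dominios.nodales.epsilon}, and then applies Lemma~\ref{lema.numeritos} once with $M=\ell$ and $K=k/2$; you instead apply Lemma~\ref{lema.numeritos} twice (to the positive lengths and to the negative lengths) and then add. Either way one lands on the same $4\ve$ bound.

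The genuine gap in your proposal is the odd case. You correctly identify that the counting breaks because the two end intervals share a sign, but you do not actually prove the corollary for odd $k$; saying one may ``restrict to even $k$ or propagate a slightly altered constant'' is not a proof of the stated inequality, and the statement makes no parity restriction. The paper closes this by extending $u_{k,t,\ve}$ to $[-\ell,\ell]$ by odd reflection, which produces an eigenfunction with $2k$ nodal domains and hence $k$ positive and $k$ negative ones, and then invokes the even case on the doubled interval (the resulting bound on $|I_{j,\ve}^+\cup I_{j,\ve}^-| - \tfrac{2(2\ell)}{2k}$ is exactly the desired one). You should supply this reflection step rather than leave the parity issue open.
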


\begin{proof}
Assume first that $k$ is even. So $k=2n$ for some $n\in\N$. Then, the number of positive nodal domains    equals the number of negative nodal domains and both equal $n$. Therefore
$$
\ell = \sum_{j=1}^n |I_{j,\ve}^+| + \sum_{j=1}^n |I_{j,\ve}^-| = \sum_{j=1}^n |I_{j,\ve}^+\cup I_{j,\ve}^-|.
$$
Let us call $a_j^\ve = |I_{j,\ve}^+\cup I_{j,\ve}^-|$, and by Lemma \ref{dominios.nodales.epsilon} we have that $|a_j^\ve - a_i^\ve| < 4\ve$. Hence, we can invoke Lemma \ref{lema.numeritos} and conclude the desired result.

If now $k$ is odd, we consider the problem in $[-\ell,\ell]$ and extend $u_{k,t,\ve}$ by odd reflexion and so we end up with a even number of positive and negative nodal domains. We apply the first part of the proof and from that the result follows. We leave the details to the reader. 
\end{proof}

\begin{rem}\label{pesos.constantes}
Observe that, since $m_0$ and $n_0$ are constant functions, it holds that $|I_{j,0}^+\cup I_{j,0}^-| = \frac{2\ell}{k}$. See \cite{FUCIK-libro}.
\end{rem}

The other key ingredient in the proof of Theorem \ref{homog.t.fijo.period} is the following result due to \cite{S14}, recently improved in \cite{Salort}.
\begin{thm}[\cite{Salort}, Theorem 1.2]\label{conv.primera.curva}
Under the above assumptions and notations, it holds that
\begin{equation}\label{eq.primera.curva}
|\lam_{2,t,\ve}^\pm - \lam_{2,t,0}^\pm|\le C\ve t^{-\frac32},
\end{equation}
where $C$ is a constant depending only on the bounds $a, b$ in \eqref{cotas.pesos}.
\end{thm}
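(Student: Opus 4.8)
The plan is to exploit the explicit one‑dimensional structure of the half‑eigenfunction attached to $\lam_{2,t,\ve}^{\pm}$. It suffices to treat $\lam_{2,t,\ve}^{+}$: the bound for $\lam_{2,t,\ve}^{-}$ follows by swapping the roles of $m_\ve$ and $n_\ve$ and replacing $t$ by $1/t$, as in the passage from Theorem~\ref{main2} to Theorem~\ref{homog.t.fijo.period}. By \cite{LY09} the eigenfunction $u_{2,t,\ve}$ has exactly one interior zero $s_\ve\in(0,\ell)$, is positive on $(0,s_\ve)$ and negative on $(s_\ve,\ell)$, so restricting \eqref{ec.1h} to each nodal domain and arguing as in the proof of Theorem~\ref{cota.alpha.beta} (via Sturm) gives, for every $\ve\ge0$,
\begin{equation}\label{eq.twodomain}
\lam_{2,t,\ve}^{+}=\lam_1^{m_\ve,(0,s_\ve)}=t^{-1}\lam_1^{n_\ve,(s_\ve,\ell)}.
\end{equation}
Since $s\mapsto\lam_1^{m_\ve,(0,s)}$ is strictly decreasing and $s\mapsto t^{-1}\lam_1^{n_\ve,(s,\ell)}$ strictly increasing, \eqref{eq.twodomain} characterises $s_\ve$ as the unique zero of $F_\ve(s):=\lam_1^{m_\ve,(0,s)}-t^{-1}\lam_1^{n_\ve,(s,\ell)}$. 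For $\ve=0$ the weights are the constants $\bar m,\bar n$, so $F_0(s)=\frac{\pi^2}{\bar m s^2}-\frac{\pi^2}{t\bar n(\ell-s)^2}$ and $s_0$, $\lam_{2,t,0}^{+}=\pi^2/(\bar m s_0^2)$ are explicit; using only $a\le\bar m,\bar n\le b$ one checks $c_1\sqrt t\,\ell\le s_0\le c_2\sqrt t\,\ell$ and $c_1\ell\le\ell-s_0\le\ell$ uniformly in $0<t<1$, with $c_1,c_2=c_1,c_2(a,b)$ (the interesting regime is $t\to0$).

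The quantitative input I would use is a homogenization rate for a single Dirichlet eigenvalue on a fixed interval: for $I\subseteq[0,\ell]$ and $r_\ve(x)=r(x/\ve)$ with $r$ $\ell$‑periodic, $a\le r\le b$ and $\bar r=\ell^{-1}\int_0^\ell r$, one has $|\lam_1^{r_\ve,I}-\lam_1^{\bar r,I}|\le C\ve\ell/|I|^3$ whenever $\ve\ell\le c_0|I|$, with $C,c_0=C,c_0(a,b)$. This is proved by inserting $\phi(x)=\sin(\pi(x-c)/|I|)$, $I=(c,d)$, in the Rayleigh quotient of $\lam_1^{r_\ve,I}$ for the upper bound, and the normalized first eigenfunction of $\lam_1^{r_\ve,I}$ in that of $\lam_1^{\bar r,I}$ for the lower bound; in both cases the only nonelementary term, $\int_I(r_\ve-\bar r)w^2$, is handled by writing $r_\ve(x)-\bar r=\ve\frac{d}{dx}R(x/\ve)$ with $R(y)=\int_0^y(r-\bar r)$ an $\ell$‑periodic function, $\|R\|_\infty\le(b-a)\ell$, and integrating by parts: the boundary terms vanish since $w\in H^1_0(I)$, leaving $\big|\int_I(r_\ve-\bar r)w^2\big|\le C\ve\ell\,\|w\|_{L^2(I)}\|w'\|_{L^2(I)}$, and $\|w'\|_{L^2(I)}^2\le C|I|^{-2}\|w\|_{L^2(I)}^2$ (valid for both test functions, using $\lam_1^{r_\ve,I}\le\pi^2/(a|I|^2)$ by Sturm) closes the estimate after dividing by $\int_I r_\ve w^2$.

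The key step is to localise $s_\ve$. Applying the last estimate to each summand of $F_\ve-F_0$ on the proportional neighbourhood $(\tfrac12 s_0,2s_0)$ of $s_0$, where $s\asymp\sqrt t\,\ell$ and $\ell-s\asymp\ell$, gives $\sup_{(s_0/2,2s_0)}|F_\ve-F_0|\le C\ve\ell\big(s_0^{-3}+t^{-1}\ell^{-3}\big)\le C\ve\,t^{-3/2}\ell^{-2}$. On the same interval $F_0\in C^1$ with $F_0'(s)=-\frac{2\pi^2}{\bar m s^3}-\frac{2\pi^2}{t\bar n(\ell-s)^3}\le -L$, where $L:=\frac{2\pi^2}{\bar m(2s_0)^3}\ge c\,t^{-3/2}\ell^{-3}$; hence $F_0(s)\le -L(s-s_0)$ for $s\in[s_0,2s_0]$ and $F_0(s)\ge -L(s-s_0)$ for $s\in[s_0/2,s_0]$, so $F_\ve=F_0+(F_\ve-F_0)$ is $<0$ once $s-s_0>L^{-1}\sup|F_\ve-F_0|$ and $>0$ once $s_0-s>L^{-1}\sup|F_\ve-F_0|$. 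Since $s_\ve$ is the unique zero of $F_\ve$,
$$
|s_\ve-s_0|\ \le\ L^{-1}\!\!\sup_{(s_0/2,2s_0)}\!\!|F_\ve-F_0|\ \le\ C\,\frac{\ve\,t^{-3/2}\ell^{-2}}{t^{-3/2}\ell^{-3}}\ =\ C\,\ve\ell,
$$
the degenerate powers of $t$ cancelling. (This uses $\ve\ell\le c_0 s_0$; in the complementary range one nodal domain of $u_{2,t,\ve}$ has length $\ge\ell/2$, so \eqref{eq.twodomain} and Sturm give $\lam_{2,t,\ve}^{+}\le 4\pi^2/(at\ell^2)$, and likewise for $\lam_{2,t,0}^{+}$, so the difference is already of the asserted order there.)

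With $|s_\ve-s_0|\le C\ve\ell$ in hand, monotonicity of $s\mapsto\lam_1^{m_\ve,(0,s)}$ and \eqref{eq.twodomain} give
$$
\lam_1^{m_\ve,(0,\,s_0+|s_\ve-s_0|)}\ \le\ \lam_{2,t,\ve}^{+}=\lam_1^{m_\ve,(0,s_\ve)}\ \le\ \lam_1^{m_\ve,(0,\,s_0-|s_\ve-s_0|)}.
$$
The fixed‑interval estimate replaces $m_\ve$ by $\bar m$ in the two outer terms at a cost $C\ve\ell/s_0^3\le C\ve t^{-3/2}\ell^{-2}$, and since $s\mapsto\pi^2/(\bar m s^2)$ has Lipschitz constant $\le C s_0^{-3}\le C t^{-3/2}\ell^{-3}$ on $[s_0/2,2s_0]$, the endpoints $\pi^2/(\bar m(s_0\pm|s_\ve-s_0|)^2)$ differ from $\lam_{2,t,0}^{+}=\pi^2/(\bar m s_0^2)$ by at most $C t^{-3/2}\ell^{-3}|s_\ve-s_0|\le C\ve t^{-3/2}\ell^{-2}$. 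Combining, $|\lam_{2,t,\ve}^{+}-\lam_{2,t,0}^{+}|\le C\ve t^{-3/2}\ell^{-2}$ with $C=C(a,b)$, which after the normalisation of the domain used in \cite{S14,Salort} is the stated bound. The main obstacle is exactly the localisation step of the third paragraph: both the transversality constant $L$ of $F_0$ at its zero and the homogenization defect of $F_\ve$ at that point blow up like $t^{-3/2}$ as $t\to0$, and one must verify — through the two‑sided a priori control $s_0\asymp\sqrt t\,\ell$, $\ell-s_0\asymp\ell$ with constants depending only on $a,b$ — that these two blow‑ups cancel so that $|s_\ve-s_0|$ stays of order $\ve$ uniformly in $t$.
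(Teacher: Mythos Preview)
The paper does not prove this statement at all: it is quoted verbatim as Theorem~1.2 of \cite{Salort} (sharpening \cite[Theorem~4.2]{S14}), and is used only as a black box in the proof of Theorem~\ref{homog.t.fijo.period}. So there is no proof in the paper to compare yours against; the relevant comparison is with the arguments in \cite{S14,Salort}, which, as the authors remark in the introduction, rely on the \emph{variational} characterisation of the first nontrivial Fu\v c\'{\i}k curve (the mountain--pass description of $\lam_{2,t}$) to push the oscillatory estimate through the Rayleigh quotient.

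Your route is genuinely different and, as far as I can see, correct. You bypass the variational structure entirely and work directly with the nodal picture: $\lam_{2,t,\ve}^{+}$ is determined by the single interior zero $s_\ve$ via $\lam_{2,t,\ve}^{+}=\lam_1^{m_\ve,(0,s_\ve)}=t^{-1}\lam_1^{n_\ve,(s_\ve,\ell)}$, and you locate $s_\ve$ by an implicit--function/transversality argument applied to $F_\ve=F_0+(F_\ve-F_0)$. The crucial observation --- that the blow--up $t^{-3/2}$ in the homogenisation defect of $F_\ve$ is exactly matched by the slope $|F_0'(s_0)|\asymp t^{-3/2}\ell^{-3}$, so that $|s_\ve-s_0|=O(\ve\ell)$ uniformly in $t$ --- is the heart of the matter and is handled cleanly. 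The split into the regime $\ve\lesssim\sqrt t$ (where the above works) and the complementary regime (where the trivial bound $\lam_{2,t,\ve}^{+}\le C/(t\ell^{2})$ already gives the claim since $t^{-1}\lesssim \ve t^{-3/2}$ there) is also correct. Compared with the variational proof, yours is more hands--on and intrinsically one--dimensional, but it has the advantage of making the mechanism (cancellation of the $t^{-3/2}$ factors between defect and transversality) completely transparent; it is also essentially the same philosophy the present paper uses for the higher curves $\C_{k,\ve}$, so it fits naturally here. One small point worth tightening: your final bound carries a factor $\ell^{-2}$, whereas the statement as written has no $\ell$; this is consistent with how the theorem is actually applied (on $[0,1]$ after the rescaling \eqref{cambio.variables}), but you should say explicitly which normalisation of the interval/period you are adopting.
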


\begin{rem}
In \cite{S14} the obtained bound is slightly worse than \eqref{eq.primera.curva}. In fact, is was proved in \cite[Theorem 4.2]{S14} that
$$
|\lam_{2,t,\ve}^\pm - \lam_{2,t,0}^\pm|\le C'\ve t^{-2},
$$
with $C'$ depending also on $a, b$.
\end{rem}

With all of these preliminaries, we can now prove the main result of the section.
\begin{proof}[Proof of Theorem \ref{homog.t.fijo.period}]
Let $u_{k,t,\ve}$ be an eigenfunction of \eqref{ec.1h} associated to $\lam_{k,t,\ve}^+$, for $\ve\ge 0$. The case of $\lambda_{k,t,\ve}^-$ is completely analogous.

Let $J_\ve = I_{1,\ve}^+\cup I_{1,\ve}^-$ be the union of the first two nodal domains of $u_{k,t,\ve}$. Let us denote $J_\ve=(0,c_\ve)$. Observe that $u_{k,t,\ve}>0$ in $I_\ve$ for $\ve\ge 0$ and that, by Theorem \ref{cota.nodal.domain}, we have the bound
\begin{equation}\label{cota.ce}
c_\ve\ge \frac{\ell}{k}\sqrt{\frac{a}{b}}(1+\sqrt{t}).
\end{equation}

Arguing as in Theorem \ref{cota.alpha.beta}, we deduce that for any $\ve\ge 0$,
\begin{equation} \label{relac}
\lam_{k,t, \ve}^+ = \lambda_2^{m_\ve, tn_\ve, J_\ve},
\end{equation}
where $\lambda_2^{m_\ve, tn_\ve, J_\ve}$ is the second eigenvalue of \eqref{ec.1h} in the interval $J_\ve$.

Performing a change of variables is easy to see that 
\begin{equation}\label{cambio.variables}
c_\ve^2\lambda_2^{m_\ve, tn_\ve, J_\ve} = \lambda_2^{m_{\ve'}, tn_{\ve'},[0,1]},
\end{equation}
where $\ve' = \frac{\ve}{c_\ve}$. Observe that from \eqref{cota.ce} it follows that $\ve'\to 0$.

Now, using Theorem \ref{conv.primera.curva} we obtain that
\begin{equation} \label{ford}
\left| \lambda_2^{m_{\ve'}, tn_{\ve'}, [0,1]} - \lambda_2^{m_0, tn_0, [0,1]}\right| \leq C \ve' t^{-\frac32} \le C \frac{k}{\ell} t^{-\frac32} \ve.
\end{equation}
where $C$ depends on the constants $a, b$ in \eqref{cotas.pesos}.

Therefore, by\eqref{cota.ce},  \eqref{cambio.variables}  and \eqref{ford}, we find
\begin{align*}
|\lam_{k,t,\ve}^+ - \lam_{k,t,0}^+| &= |c_\ve^{-2}\lam_2^{m_{\ve'}, tn_{\ve'}, [0,1]} - c_0^{-2}\lam_2^{m_0, tn_0, [0,1]}| \\
&\le c_\ve^{-2} |\lam_2^{m_{\ve'}, tn_{\ve'}, [0,1]} - \lam_2^{m_0, tn_0,[0,1]}| + \lam_2^{m_0, tn_0, [0,1]} |c_\ve^{-2} - c_0^{-2}|\\
&\le C \left(\frac{k}{\ell}\right)^3 t^{-\frac32} \ve + \lam_2^{m_0, tn_0,[0,1]}  |c_\ve^{-2} - c_0^{-2}|.
\end{align*}
Now, from Corollary \ref{conv.dominios.nodales} and the subsequent remark, 
$$
|c_\ve^{-2} - c_0^{-2}|\le C \left(\frac{k}{\ell}\right)^3 \ve,
$$
with $C$ a universal constant. 

Finally, we observe that from Theorem \ref{cota.nodal.domain},
$$
\lam_2^{m_0, tn_0, [0,1]} = \lam_1^{m_0, I_{1,\ve}^+} = \frac{\pi^2}{m_0 |I_{1,\ve}^+|^2} \le C t,
$$
with $C$ depending on $a, b$ in \eqref{cotas.pesos}.
\end{proof}

\section*{Acknowledgements}

This paper was partially supported by Universidad de Buenos Aires under grant UBACyT 20020130100283BA and by ANPCyT under grant PICT 2012-0153. The authors are members of CONICET.

\bibliographystyle{amsplain}
\bibliography{Biblio}

\providecommand{\bysame}{\leavevmode\hbox to3em{\hrulefill}\thinspace}
\providecommand{\MR}{\relax\ifhmode\unskip\space\fi MR }
\providecommand{\MRhref}[2]{%
  \href{http://www.ams.org/mathscinet-getitem?mr=#1}{#2}
}
\providecommand{\href}[2]{#2}
\begin{thebibliography}{10}

\bibitem{Con}
L.~Baffico, C.~Conca, and M.~Rajesh, \emph{Homogenization of a class of
  nonlinear eigenvalue problems}, Proc. Roy. Soc. Edinburgh Sect. A
  \textbf{136} (2006), no.~1, 7--22. \MR{2217505 (2007b:35021)}

\bibitem{zuazua}
Carlos Castro and Enrique Zuazua, \emph{Low frequency asymptotic analysis of a
  string with rapidly oscillating density}, SIAM J. Appl. Math. \textbf{60}
  (2000), no.~4, 1205--1233 (electronic). \MR{1760033 (2001h:34117)}

\bibitem{ChP07}
Valeria Chiad{\`o}~Piat, Gianni Dal~Maso, and Anneliese Defranceschi,
  \emph{{$G$}-convergence of monotone operators}, Ann. Inst. H. Poincar\'e
  Anal. Non Lin\'eaire \textbf{7} (1990), no.~3, 123--160. \MR{1065871
  (91f:49018)}

\bibitem{DAN}
E.~N. Dancer, \emph{On the {D}irichlet problem for weakly non-linear elliptic
  partial differential equations}, Proc. Roy. Soc. Edinburgh Sect. A
  \textbf{76} (1976/77), no.~4, 283--300. \MR{0499709 (58 \#17506)}

\bibitem{FBPS1}
Juli{\'a}n Fern{\'a}ndez~Bonder, Juan~P. Pinasco, and Ariel~M. Salort,
  \emph{Convergence rate for some quasilinear eigenvalues homogenization
  problems}, J. Math. Anal. Appl. \textbf{423} (2015), no.~2, 1427--1447.
  \MR{3278207}

\bibitem{FBPS}
\bysame, \emph{Quasilinear eigenvalues}, Rev. Un. Mat. Argentina \textbf{56}
  (2015), no.~1, 1--25. \MR{3361839}

\bibitem{FUCIK-libro}
Svatopluk Fu{\v{c}}{\'{\i}}k and Alois Kufner, \emph{Nonlinear differential
  equations}, Studies in Applied Mechanics, vol.~2, Elsevier Scientific
  Publishing Co., Amsterdam-New York, 1980. \MR{558764 (81e:35001)}

\bibitem{Kenig}
Carlos~E. Kenig, Fanghua Lin, and Zhongwei Shen, \emph{Convergence rates in
  {$L^2$} for elliptic homogenization problems}, Arch. Ration. Mech. Anal.
  \textbf{203} (2012), no.~3, 1009--1036. \MR{2928140}

\bibitem{Kes1}
Srinivasan Kesavan, \emph{Homogenization of elliptic eigenvalue problems. {I}},
  Appl. Math. Optim. \textbf{5} (1979), no.~2, 153--167. \MR{533617
  (80f:65111)}

\bibitem{Kes2}
\bysame, \emph{Homogenization of elliptic eigenvalue problems. {II}}, Appl.
  Math. Optim. \textbf{5} (1979), no.~3, 197--216. \MR{546068 (80i:65110)}

\bibitem{LY09}
Wei Li and Ping Yan, \emph{Various half-eigenvalues of scalar {$p$}-{L}aplacian
  with indefinite integrable weights}, Abstr. Appl. Anal. (2009), Art. ID
  109757, 27. \MR{2539910 (2010g:34205)}

\bibitem{LY10}
\bysame, \emph{Continuity and continuous differentiability of half-eigenvalues
  in potentials}, Commun. Contemp. Math. \textbf{12} (2010), no.~6, 977--996.
  \MR{2748281 (2012c:34292)}

\bibitem{Malik}
Josef Mal{\'{\i}}k, \emph{Oscillations in cable-stayed bridges: existence,
  uniqueness, and homogenization of cable systems}, J. Math. Anal. Appl.
  \textbf{266} (2002), no.~1, 100--126. \MR{1876772 (2002j:74030)}

\bibitem{Ol}
O.~A. Ole{\u\i}nik, A.~S. Shamaev, and G.~A. Yosifian, \emph{Mathematical
  problems in elasticity and homogenization}, Studies in Mathematics and its
  Applications, vol.~26, North-Holland Publishing Co., Amsterdam, 1992.
  \MR{1195131 (93k:35025)}

\bibitem{PiS}
Juan~P. Pinasco and Ariel~M. Salort, \emph{Asymptotic behavior of the curves in
  the fucik spectrum}, Commun. Contemp. Math. \textbf{to appear} (2015).

\bibitem{RYN}
Bryan~P. Rynne, \emph{The {F}u\v c\'\i k spectrum of general
  {S}turm-{L}iouville problems}, J. Differential Equations \textbf{161} (2000),
  no.~1, 87--109. \MR{1740358 (2000j:34031)}

\bibitem{S14}
Ariel~Martin Salort, \emph{Convergence rates in a weighted {F}u\u cik problem},
  Adv. Nonlinear Stud. \textbf{14} (2014), no.~2, 427--443. \MR{3194363}

\bibitem{Salort}
\bysame, \emph{Homogenization of {F}u{\v{c}}{\'{\i}}k eigenvalues by optimal
  partition methods}, preprint (2016), 13p.

\end{thebibliography}

\end{document}